\newtheorem{defn}{Definition}[section]
\newtheorem{cor}[defn]{Corollary}
\newtheorem{thm}[defn]{Theorem}
\newtheorem{lemma}[defn]{Lemma}
\newtheorem{remark}[defn]{Remark}
\newtheorem{cond}[defn]{Condition}
\newcommand{\be}{\begin{equation}}
\newcommand{\ee}{\end{equation}}
\newcommand{\bea}{\begin{eqnarray}}
\newcommand{\eea}{\end{eqnarray}}
\newcommand{\beas}{\begin{eqnarray*}}
\newcommand{\eeas}{\end{eqnarray*}}
\newcommand{\R}{\mathbb{R}}
\newcommand{\ve}{\varepsilon}
\newcommand{\noi}{\noindent}
\newcommand{\goto}{\rightarrow}
\newcommand{\hsp}{\hspace{.3in}}
\newcommand{\bp}{\begin{proof}}
\newcommand{\ep}{\end{proof}}
\newcommand{\TV}{\tiny{\mbox{TV}}}
\newcommand{\mix}{\tiny{\mbox{mix}}}
\begin{document}

\title[Rapid Mixing of Glauber Dynamics of Gibbs Ensembles via  Aggregate Path Coupling]{Rapid Mixing of Glauber Dynamics of Gibbs Ensembles via  Aggregate Path Coupling and Large Deviations Methods}

\author{Yevgeniy Kovchegov}
\address{Department of Mathematics, Oregon State University, Corvallis, OR  97331}
\email{kovchegy@math.oregonstate.edu}

\author{Peter T. Otto}
\address{Department of Mathematics, Willamette University, Salem, OR 97302,  phone: 1-503-370-6487}
\email{potto@willamette.edu}

\subjclass[2000]{Primary 60J10; Secondary 60K35}

\begin{abstract}
In this paper, we present a novel extension to the classical path coupling method to statistical mechanical models which we refer to as aggregate path coupling.  In conjunction with large deviations estimates, we use this aggregate path coupling method to prove rapid mixing of Glauber dynamics for a large class of statistical mechanical models, including models that exhibit discontinuous phase transitions which have traditionally been more difficult to analyze rigorously.  The parameter region for rapid mixing for the generalized Curie-Weiss-Potts model is derived as a new application of the aggregate path coupling method.
\end{abstract}

\date{\today}

\maketitle

\section{Introduction} 

\medskip

In recent years, mixing times of dynamics of statistical mechanical models have been the focus of much probability research, drawing interest from researchers in mathematics, physics and computer science.  The topic is both physically relevant and mathematically rich.  But up to now, most of the attention has focused on particular models including rigorous results for several mean-field models.  A few examples are (a) the Curie-Weiss (mean-field Ising) model \cite{DLP1, DLP2, EL, LLP}, (b) the mean-field Blume-Capel model \cite{EKLV, KOT}, (c) the Curie-Weiss-Potts (mean-field Potts) model \cite{BR, CDLLPS, GSV}.  A good survey of the topic of mixing times of statistical mechanical models can be found in the recent paper by Cuff et.\ al.\ \cite{CDLLPS}.

\medskip

An important question driving the work in the field is the relationship between the mixing times of the dynamics and the equilibrium phase transition structure of the corresponding statistical mechanical models.  For example, the Curie-Weiss model, which undergoes a continuous, second-order, phase transition, was one of the first models studied to investigate this relationship and it was found that the mixing times undergo a transition from rapid to slow mixing at precisely the same critical value as the equilibrium phase transition \cite{LPW}.  This property was also shown for the mean-field Blume-Capel model \cite{KOT} in the parameter regime where the model undergoes a continuous, second-order phase transition.  

\medskip

On the other hand, for models that exhibit a discontinuous, first-order, phase transition, they do not appear to share this same property.  This was first verified for the mean-field Blume-Capel model in the discontinuous phase transition parameter regime \cite{KOT} and recently for the Curie-Weiss-Potts model \cite{CDLLPS}.  For these models, it was shown that the change in mixing times occurs, not at the equilibrium phase transition value, but instead at a smaller parameter value at which metastable states first emerge.  

\medskip

The results for models that exhibit a continuous phase transition were obtained by a direct application of the standard path coupling method that requires contraction of the mean path coupling distance between all neighboring configurations. See \cite{BD} and \cite{LPW}.  For models that exhibit a discontinuous phase transition straightforward path coupling methods fail and the results of \cite{KOT} were obtained by applying a novel extension called {\it aggregate path coupling} in one dimension and large deviations estimates.

\medskip

In this paper, we extend the work we did in \cite{KOT} and provide a single general framework for determining the parameter regime for rapid mixing of the Glauber dynamics for a large class of statistical mechanical models, including all those listed above.  The aggregate path coupling method presented here extends the classical path coupling method in two directions.  First, we consider macroscopic quantities in higher dimensions and find a monotone contraction path by considering a related variational problem in the continuous space. We also do not require the monotone path to be a nearest-neighbor path. In fact, in most situations we consider, a nearest-neighbor path will not work for proving contraction.  Second, the aggregation of the mean path distance along a monotone path  is shown to contract for some but not all pairs of configurations.  Yet, we use measure concentration and large deviation principle to show that showing contraction for pairs of configurations, where at least one of them is close enough to the equilibrium, is sufficient for establishing rapid mixing.

\medskip

Our main result is general enough to be applied to statistical mechanical models that undergo both types of phase transitions and to models whose macroscopic quantity are in higher dimensions.  Moreover, despite the generality, the application of our results requires straightforward conditions that we illustrate in Section \ref{GCWP}.  This is a significant simplification for proving rapid mixing for statistical mechanical models, especially those that undergo first-order, discontinuous phase transitions.  Lastly, our results also provide a link between measure concentration of the stationary distribution and rapid mixing of the corresponding dynamics for this class of statistical mechanical models.  This idea has been previously studied in \cite{MJLuczak} where the main result showed that rapid mixing implied measure concentration defined in terms of Lipschitz functions.  In our work, we prove a type of converse where measure concentration, in terms of a large deviation principle, implies rapid mixing.

\medskip

The paper is organized as follows. In Section \ref{sec:gibbs} the general construction of the class of the mean-field models considered in this paper is provided. Next, in Section \ref{ldp} the large deviation principle for the Gibbs measures from  \cite{EHT} that will be used in the main result of this manuscript is reviewed, and the concept of equilibrium macrostates is discussed.  In Section \ref{sec:glauber} and Section \ref{sec:glauberprob} the Glauber dynamics is introduced, and its transition probabilities are analyzed. Section \ref{glauber} provides a greedy coupling construction, standard for the Glauber dynamics of a mean-field statistical mechanical model. Section \ref{sec:mean} describes a single time step evolution of the mean coupling distance for two configurations whose spin proportion vectors are $\ve$-close. Section \ref{sec:mean} is followed by Section \ref{sec:agg} which describes a single time step evolution of the mean coupling distance in general by aggregating  the mean coupling distances along a monotone path of points connecting two configurations. Also, in  Section \ref{sec:agg} general conditions for the main result Theorem \ref{thm:main} are stated and discussed. The main result is stated and proved in Section \ref{sec:main}. The paper concludes with Section \ref{GCWP}, where the region of rapid mixing $\beta < \beta_s(q,r)$ for the generalized Curie-Weiss-Potts model (including the standard Curie-Weiss-Potts model) is proven as an immediate and simple application of the main result  of the current paper.

\bigskip

\section{Gibbs Ensembles} \label{sec:gibbs} 

\medskip

We begin by defining the general class of statistical mechanical spin models for which our results can be applied.  As mentioned above, this class includes all of the models listed in the introduction and we illustrate the application of our main result for the particular model: the Curie-Weiss-Potts model, in section \ref{GCWP}.  

\medskip

Let $q$ be a fixed integer and define $\Lambda = \{ e^1, e^2, \ldots, e^q \}$, where $e^k$ are the $q$ standard basis vectors of $\R^q$.  A {\it configuration} of the model has the form $\omega = (\omega_1, \omega_2, \ldots, \omega_n) \in \Lambda^n$.  We will consider a configuration on a graph with $n$ vertices and let $X_i(\omega) = \omega_i$ be the {\it spin} at vertex $i$.  The random variables $X_i$'s for $i=1, 2, \ldots, n$ are independent and identically distributed with common distribution $\rho$.  

\medskip

In terms of the microscopic quantities, the spins at each vertex, the relevant macroscopic quantity is the {\it magnetization vector} (aka empirical measure or proportion vector) 
\be
\label{eqn:empmeasure} 
L_n(\omega) = (L_{n,1}(\omega), L_{n,2}(\omega), \ldots, L_{n,q}(\omega)), 
\ee
where the $k$th component is defined by 
\[ L_{n, k}(\omega) = \frac{1}{n} \sum_{i=1}^n \delta(\omega_i, e^k) \]
which yields the proportion of spins in configuration $\omega$ that take on the value $e^k$.  The magnetization vector $L_n$ takes values in the set of probability vectors 
\be
\label{eqn:latticesimplex} 
\mathcal{P}_n = \left\{ \frac{n_k}{n} : \mbox{each} \ n_k \in \{0, 1, \ldots, n\} \ \mbox{and} \ \sum_{k=1}^q n_k = n \right\} 
\ee
inside the continuous simplex
$$ \mathcal{P} = \left\{ \nu \in \R^q : \nu = (\nu_1, \nu_2, \ldots, \nu_q), \mbox{each} \ \nu_k \geq 0, \ \sum_{k=1}^q \nu_k = 1 \right\}.$$

\medskip

\begin{remark}
For $q=2$, the empirical measure $L_n$ yields the empirical mean $S_n(\omega)/n$ where $S_n(\omega) = \sum_{i=1}^n \omega_i$.  Therefore, the class of models considered in this paper includes those where the relevant macroscopic quantity is the empirical mean, like the Curie-Weiss (mean-field Ising) model.
\end{remark}

Statistical mechanical models are defined in terms of the Hamiltonian function, which we denote by $H_n(\omega)$.  The Hamiltonian function encodes the interactions of the individual spins and the total energy of a configuration.  To take advantage of the large deviation bounds stated in the next section, we assume that the Hamiltonian can be expressed in terms of the empirical measures $L_n$ as stated in the following definition.  

\begin{defn}
\label{defn:irf}
For $z \in \R^q$, we define the {\bf interaction representation function}, denoted by $H(z)$, to be a differentiable function satisfying 
\[ H_n (\omega) = n H(L_n(\omega)) \] 
\end{defn}

\medskip

\noi
Throughout the paper we suppose the interaction representation function $H(z)$ is a finite concave $\mathcal{C}^3(\R^q)$ function that has the form
$$H(z)=H_1(z_1)+H_2(z_2)+\ldots +H_q(z_q)$$
For the Curie-Weiss-Potts (CWP) model discussed in section \ref{GCWP},
$$H(z)=-{1 \over 2} \big<z,z \big>=-{1 \over 2} z_1^2-{1 \over 2} z_2^2-\ldots -{1 \over 2} z_q^2.$$

\medskip

\begin{defn}
The {\bf Gibbs measure} or {\bf Gibbs ensemble} in statistical mechanics is defined as
\be 
\label{eqn:gibbs}
P_{n, \beta} (B) = \frac{1}{Z_n(\beta)} \int_B \exp \left\{ -\beta H_n(\omega) \right\} dP_n = \frac{1}{Z_n(\beta)} \int_B \exp \left\{  -\beta n \, H\left(L_n(\omega) \right) \right\} dP_n 
\ee
where $P_n$ is the product measure with identical marginals $\rho$ and $Z_n(\beta) = \int_{\Lambda^n} \exp \left\{ -\beta H_n(\omega) \right\} dP_n$ is the {\bf partition function}.  The positive parameter $\beta$ represents the inverse temperature of the external heat bath.
\end{defn}

\medskip

\begin{remark}
To simplify the presentation, we take $\Lambda = \{ e^1, e^2, \ldots, e^q \}$, where $e^k$ are the $q$ standard basis vectors of $\R^q$.  But our analysis has a straight-forward generalization to the case where $\Lambda = \{ \theta^1, \theta^2, \ldots, \theta^q \}$, where $\theta^k$ is any basis of $\R^q$.  In this case, the product measure $P_n$ would have identical one-dimensional marginals equal to 
\[ \bar{\rho} = \frac{1}{q} \sum_{i=1}^q \delta_{\theta^i} \]
\end{remark}

\medskip

An important tool we use to prove rapid mixing of the Glauber dynamics that converge to the Gibbs ensemble above is the large deviation principle of the empirical measure with respect to the Gibbs ensemble.  This measure concentration is precisely what drives the rapid mixing.  The large deviations background is presented next.

\bigskip

\section{Large Deviations} \label{ldp} 

\medskip

By Sanov's Theorem, the empirical measure $L_n$ satisfies the large deviation principle (LDP) with respect to the product measure $P_n$ with identical marginals $\rho$ and the rate function is given by the {\bf relative entropy}
\[ R(\nu | \rho) = \sum_{k=1}^q \nu_k \log \left( \frac{\nu_k}{\rho_k} \right) \]
for $\nu \in \mathcal{P}$.  Theorem 2.4 of \cite{EHT} yields the following result for the Gibbs measures (\ref{eqn:gibbs}).
\begin{thm}
The empirical measure $L_n$ satisfies the LDP with respect to the Gibbs measure $P_{n, \beta}$ with rate function
\[ I_\beta(z) = R(z | \rho) + \beta H(z) - \inf_t \{ R(t | \rho) + \beta H(t) \} \]
In other words, for any closed subset $F$,
\be
\label{eqn:ldpbound}
 \limsup_{n \goto \infty} \frac{1}{n} \log P_{n, \beta} ( L_n \in F ) \leq - \inf_{z \in F} I_\beta (z) 
 \ee
 and for any open subset $G$,
 \[ 
  \liminf_{n \goto \infty} \frac{1}{n} \log P_{n, \beta} ( L_n \in G ) \geq - \inf_{z \in G} I_\beta (z).
\]  
\end{thm}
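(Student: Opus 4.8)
The plan is to recognize this result as a tilted (Varadhan-type) large deviation principle: the Gibbs measure $P_{n,\beta}$ is obtained from the product measure $P_n$ by reweighting with the density $Z_n(\beta)^{-1}\exp\{-\beta n H(L_n)\}$, and because $H$ is a continuous functional of the empirical measure, the LDP for $L_n$ under $P_n$ transfers to an LDP under $P_{n,\beta}$ with rate function shifted by $\beta H$ and renormalized. I would build the argument on two inputs: Sanov's theorem, which supplies the LDP of $L_n$ under $P_n$ with good rate function $R(\cdot\,|\,\rho)$, and the fact that $L_n$ takes values in the compact simplex $\mathcal{P}$, on which the $\mathcal{C}^3$ function $H$ is bounded and uniformly continuous.

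First I would pin down the normalization. Writing
\be
Z_n(\beta) = \int_{\Lambda^n} \exp\{-\beta n H(L_n(\omega))\}\, dP_n,
\ee
and noting that $-\beta H$ is a bounded continuous functional on $\mathcal{P}$, I would apply Varadhan's lemma to obtain
\be
\lim_{n \goto \infty} \frac{1}{n}\log Z_n(\beta) = \sup_{z \in \mathcal{P}} \{ -\beta H(z) - R(z\,|\,\rho)\} = -\inf_{t} \{ R(t\,|\,\rho) + \beta H(t)\}.
\ee
Denoting the right-hand side by $-\Gamma_\beta$, this $\Gamma_\beta$ is exactly the subtracted constant appearing in the claimed rate function $I_\beta$.

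Next, for the upper bound on a closed set $F$, I would write $P_{n,\beta}(L_n \in F)$ as the ratio of $\int_{\{L_n \in F\}} \exp\{-\beta n H(L_n)\}\, dP_n$ to $Z_n(\beta)$. For the numerator, since $F \cap \mathcal{P}$ is compact and $H$ is uniformly continuous there, I would cover it by finitely many small balls on each of which $H$ is nearly constant, bound the integral over each ball $B_i$ above by $\exp\{n(-\beta H(z_i)+o(1))\}\,P_n(L_n \in B_i)$, and apply the Sanov upper bound, letting the mesh shrink. This yields
\be
\limsup_{n \goto \infty}\frac{1}{n}\log\int_{\{L_n\in F\}}\exp\{-\beta n H(L_n)\}\,dP_n \leq -\inf_{z\in F}\{R(z\,|\,\rho)+\beta H(z)\}.
\ee
Dividing by $Z_n(\beta)$ and using the limit above gives $-\inf_{z\in F}\{R(z\,|\,\rho)+\beta H(z)\}+\Gamma_\beta = -\inf_{z\in F} I_\beta(z)$, which is (\ref{eqn:ldpbound}). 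For the lower bound on an open set $G$, I would fix $z \in G$ and a ball $B(z,\delta)\subseteq G$ on which continuity of $H$ makes $H$ within $\eta$ of $H(z)$, bound the integral from below by $\exp\{-\beta n(H(z)+\eta)\}\,P_n(L_n\in B(z,\delta))$, apply the Sanov lower bound, and then let $\delta,\eta\goto 0$ and optimize over $z\in G$.

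I do not expect any single step to be a serious obstacle, since the compactness of $\mathcal{P}$ removes the usual tail and moment conditions that complicate Varadhan's lemma on noncompact spaces. The one point demanding care is the standard but slightly delicate interchange between the uniform continuity of $H$ and the variational infimum, namely replacing $H$ by its value at the center of a small ball in both the upper and lower estimates and verifying that the resulting errors vanish as the mesh (or $\delta$) tends to zero. Once those approximations are controlled, combining the numerator estimates with the partition-function limit produces precisely the rate function $I_\beta$, and the statement is exactly the content of Theorem 2.4 of \cite{EHT} specialized to the present setting.
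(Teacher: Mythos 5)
Your proposal is correct: the exponential-tilting argument (Sanov's theorem for $L_n$ under the product measure, Varadhan's lemma for the normalization $Z_n(\beta)$, and the covering/ball estimates for the numerator, all simplified by the compactness of the simplex $\mathcal{P}$ and the continuity of $H$) is the standard and complete route to this LDP. The paper itself gives no proof, importing the statement directly from Theorem 2.4 of \cite{EHT}, and the mechanism behind that cited result is essentially the tilting argument you describe, so your approach matches the intended one.
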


\medskip

The LDP upper bound (\ref{eqn:ldpbound}) stated in the previous theorem yields the following natural definition of {\bf equilibrium macrostates} of the model.
\be
\label{eqn:eqmac} 
\mathcal{E}_\beta := \left\{ \nu \in \mathcal{P} : \nu \ \mbox{minimizes} \ R(\nu | \rho) + \beta H(\nu) \right\} 
\ee
For our main result, we assume that there exists a positive interval $B$ such that for all $\beta \in B$, $\mathcal{E}_\beta$ consists of a single state $z_\beta$.  We refer to this interval $B$ as the single phase region.

\medskip

Again from the LDP upper bound, when $\beta$ lies in the single phase region, we get
\be
\label{eqn:ldplimit}
P_{n, \beta} (L_n \in dx) \Longrightarrow \delta_{z_\beta} \hsp \mbox{as} \hsp n \goto \infty. 
\ee
The above asymptotic behavior will play a key role in obtaining a rapid mixing time rate for the Glauber dynamics corresponding to the Gibbs measures (\ref{eqn:gibbs}).

\medskip

An important function in our work is the free energy functional defined below.  It is defined in terms of the interaction representation function $H$ and the logarithmic moment generating function of a single spin; specifically, for $z \in \R^q$ and $\rho$ equal to the uniform distribution, the {\bf logarithmic moment generating function} of $X_1$, the spin at vertex $1$, is defined by 
\be 
\label{eqn:lmgf}
\Gamma(z) = \log \left(  \frac{1}{q} \sum_{k=1}^q \exp\{z_k\}\right). 
\ee

\begin{defn}
\label{defn:fef}
The {\bf free energy functional} for the Gibbs ensemble $P_{n, \beta}$ is defined as
\be 
\label{eqn:fef}
G_{\beta} (z) = \beta (-H)^\ast (-\nabla H(z)) - \Gamma( -\beta \nabla H(z))
\ee
where for a finite, differentiable, convex function $F$ on $\R^q$, $F^\ast$ denotes its Legendre-Fenchel transform defined by 
\[ F^\ast(z) = \sup_{x \in \R^q} \{ \langle x, z \rangle - F(x) \} \]
\end{defn}

The following lemma yields an alternative formulation of the set of equilibrium macrostates of the Gibbs ensemble in terms of the free energy functional.  The proof is a straightforward generalization of Theorem A.1 in \cite{CET}.  

\begin{lemma}
\label{lemma:fefeqstates}
Suppose $H$ is finite, differentiable, and concave.  Then 
\[ \inf_{z \in \mathcal{P}} \{ R(z | \rho) + \beta H(z) \} = \inf_{z \in \R^q}  \{ G_\beta(z) \} \]
Moreover, $z_0 \in \mathcal{P}$ is a minimizer of $R(z | \rho) + \beta H(z)$ if and only if $z_0$ is a minimizer of $G_\beta(z)$.
\end{lemma}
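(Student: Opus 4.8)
The plan is to reduce the identity to two applications of Legendre--Fenchel duality, exactly as in the generalization of Theorem A.1 of \cite{CET}. The first ingredient is the classical duality between relative entropy and the logarithmic moment generating function. With $\rho$ uniform, I would compute $\Gamma^\ast(\nu)=\sup_{z\in\R^q}\{\langle z,\nu\rangle-\Gamma(z)\}$ directly: the stationarity condition $\nu_k=e^{z_k}/\sum_j e^{z_j}$ is solved by $z_k=\log\nu_k+\mathrm{const}$, and substituting back yields $\Gamma^\ast(\nu)=R(\nu\mid\rho)$ for $\nu\in\mathcal P$ and $\Gamma^\ast(\nu)=+\infty$ otherwise. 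Since $\Gamma$ is finite, convex and lower semicontinuous, Fenchel--Moreau gives $\Gamma^{\ast\ast}=\Gamma$, so $\Gamma$ is in turn the conjugate of $R(\cdot\mid\rho)$.

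Next I would rewrite the left-hand infimum. Because $R(\cdot\mid\rho)=+\infty$ off $\mathcal P$, the constraint $z\in\mathcal P$ is automatic and the infimum may be taken over all of $\R^q$. As $H$ is finite, differentiable and concave, $-H$ is finite, differentiable and convex, so $(-H)^{\ast\ast}=-H$ and
\[ \beta H(z)=-\beta(-H)^{\ast\ast}(z)=\inf_{y\in\R^q}\bigl\{\beta(-H)^\ast(y)-\beta\langle z,y\rangle\bigr\}. \]
Substituting this and interchanging the two infima,
\[ \inf_{z\in\mathcal P}\{R(z\mid\rho)+\beta H(z)\}=\inf_{y\in\R^q}\Bigl\{\beta(-H)^\ast(y)-\sup_{z\in\R^q}\bigl[\langle z,\beta y\rangle-R(z\mid\rho)\bigr]\Bigr\}=\inf_{y\in\R^q}\bigl\{\beta(-H)^\ast(y)-\Gamma(\beta y)\bigr\}, \]
where the inner supremum is identified with $\Gamma(\beta y)$ using the first paragraph. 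Writing $g(y):=\beta(-H)^\ast(y)-\Gamma(\beta y)$, Definition \ref{defn:fef} shows $G_\beta(z)=g(-\nabla H(z))$. Since $(-H)^\ast$ is finite only on its effective domain, which for the differentiable convex function $-H$ is the closure of the range of $-\nabla H$, the infimum of $g$ over $\R^q$ is realized on $\{-\nabla H(z):z\in\R^q\}$ and hence equals $\inf_{z\in\R^q}G_\beta(z)$. This yields the equality of infima.

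For the minimizer correspondence I would match critical points. A Lagrange multiplier computation for $R(\cdot\mid\rho)+\beta H$ on $\mathcal P$ (constraint $\sum_k z_k=1$) shows its interior stationary points solve the fixed-point equation $z_k=e^{-\beta H_k'(z_k)}/\sum_j e^{-\beta H_j'(z_j)}$. Differentiating $G_\beta$, together with the envelope identity $\nabla(-H)^\ast(-\nabla H(z))=z$, gives $\partial_k G_\beta(z)=-\beta H_k''(z_k)\,[\,z_k-\partial_k\Gamma(-\beta\nabla H(z))\,]$, so under strict concavity of the $H_k$ (which holds in our applications, e.g. $H(z)=-\tfrac12\langle z,z\rangle$ for CWP) the stationary points of $G_\beta$ solve the identical equation, and any such solution automatically lies in $\mathcal P$. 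Evaluating both functionals at a common stationary point $z^\ast$, by substituting $-\beta H_k'(z_k^\ast)=\log z_k^\ast+\log\sum_j e^{-\beta H_j'(z_j^\ast)}$, shows $G_\beta(z^\ast)=R(z^\ast\mid\rho)+\beta H(z^\ast)$. Since $R(\cdot\mid\rho)+\beta H$ is continuous on the compact simplex and its minimizer is an interior stationary point, combining this with the equality of infima gives the asserted biconditional.

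The step I expect to be the main obstacle is the domain/range bookkeeping in the second paragraph: one must verify that the minimizing $y$ for $g$ lies in the relative interior of $\mathrm{dom}\,(-H)^\ast$, so that it is genuinely of the form $-\nabla H(z)$ and the infimum of $g$ does not escape to the boundary where $-\nabla H$ may fail to be surjective. This, together with the use of strict concavity to invert $\nabla H$ and to guarantee $H_k''\neq 0$ in the gradient computation, is where the differentiability and concavity hypotheses on $H$ are really used.
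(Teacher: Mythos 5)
The paper offers no proof of Lemma \ref{lemma:fefeqstates} beyond the remark that it is a straightforward generalization of Theorem A.1 of \cite{CET}, and your argument is precisely that generalization: the two Legendre--Fenchel dualities $\Gamma^\ast=R(\cdot\mid\rho)$ on $\mathcal P$ (and $+\infty$ off $\mathcal P$) and $-H=(-H)^{\ast\ast}$, followed by the interchange of infima, is exactly how the equality of the infima is obtained there. The domain issue you flag is real but is resolved just as you indicate: the relative interior of the effective domain of $(-H)^\ast$ is contained in the range of $-\nabla H$, and a convex function is continuous along line segments approaching the boundary of its domain from the relative interior, so the infimum of $y\mapsto\beta(-H)^\ast(y)-\Gamma(\beta y)$ over $\R^q$ is not decreased by restricting to the range of $-\nabla H$.

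On the minimizer correspondence, two remarks. The cleanest way to close your compressed converse is to record the pointwise bound $G_\beta(z)\le R(z\mid\rho)+\beta H(z)$ for all $z\in\mathcal P$, which follows from $(-H)^\ast(-\nabla H(z))=H(z)-\langle z,\nabla H(z)\rangle$ (the supremum is attained at $x=z$ by concavity) together with the Fenchel--Young inequality $\Gamma(-\beta\nabla H(z))\ge\langle z,-\beta\nabla H(z)\rangle-R(z\mid\rho)$; equality holds exactly when $z=g^{H,\beta}(z)$, which is the value identity you verify at common stationary points, and combined with the equality of infima this yields both implications once one knows that the relevant minimizers are stationary. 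That last step is where your appeal to $H_k''\neq 0$ enters, and it is not an artifact of your method: for merely concave $H$ (e.g.\ affine) the function $G_\beta$ is constant and the biconditional in the lemma is simply false, so strict concavity at interior points --- which does hold for the models of Section \ref{GCWP} --- is a tacit hypothesis of the lemma that you were right to surface. With that caveat made explicit, your proof is correct and is, in substance, the one the paper intends.
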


\noi 
Therefore, the set of equilibrium macrostates can be expressed in terms of the free energy functional as 
\be
\label{eqn:fefeqst}
\mathcal{E}_\beta = \left\{ z \in \mathcal{P} : z \ \mbox{minimizes} \ G_\beta(z) \right\} 
\ee

As mentioned above, we consider only the single phase region of the Gibbs ensemble; i.e.\ values of $\beta$ where $G_\beta(z)$ has a unique global minimum.  For example, for the Curie-Weiss-Potts model, the single phase region are values of $\beta$ such that $0 < \beta < \beta_c := (2(q-1)/(q-2)) \log(q-1)$.  At this critical value $\beta_c$, the model undergoes a first-order, discontinuous phase transition in which the single phase changes to a multiple phase discontinuously.  This is discussed in detail in Section \ref{GCWP}.

\medskip

As we will show, the geometry of the free energy functional $G_\beta$ not only determines the equilibrium behavior of the Gibbs ensembles but it also yields the condition for rapid mixing of the corresponding Glauber dynamics.

\bigskip

\section{Glauber Dynamics and Mixing Times} \label{sec:glauber} 

\medskip

On the configuration space $\Lambda^n$, we define the Glauber dynamics for the class of spin models considered in this paper.  These dynamics yield a reversible Markov chain $X^t$ with stationary distribution being the Gibbs ensemble $P_{n, \beta}$.

\medskip

(i) Select a vertex $i$ uniformly, 

\smallskip

(ii) Update the spin at vertex $i$ according to the distribution $P_{n, \beta}$, conditioned on the event that the spins at all vertices not equal to $i$ remain unchanged.  

\medskip

For a given configuration $\sigma = (\sigma_1, \sigma_2, \ldots, \sigma_n)$, denote by $\sigma_{i, e^k}$ the configuration that agrees with $\sigma$ at all vertices $j \neq i$ and the spin at the vertex $i$ is $e^k$; i.e. 
\[ \sigma_{i, e^k} = (\sigma_1, \sigma_2, \ldots, \sigma_{i-1}, e^k, \sigma_{i+1}, \ldots, \sigma_n) \]

Then if the current configuration is $\sigma$ and vertex $i$ is selected, the probability the spin at $i$ is updated to $e^k$, denoted by $P(\sigma \goto \sigma_{i, e^k})$, is equal to 
\be
\label{eqn:tranprob1} 
P(\sigma \goto \sigma_{i, e^k}) = \frac{\exp\big\{-\beta n H(L_n(\sigma_{i, e^k}))\big\}}{ \sum_{\ell=1}^q \exp\big\{-\beta n H(L_n(\sigma_{i, e^\ell}))\big\}}. 
\ee

\medskip

The mixing time is a measure of the convergence rate of a Markov chain to its stationary distribution and is defined in terms of 
the {\bf total variation distance} between two distributions $\mu$ and $\nu$ on the configuration space $\Omega$ is defined by
\[ \| \mu - \nu \|_{\TV} = \sup_{A \subset \Omega} | \mu(A) - \nu(A)| = \frac{1}{2} \sum_{x \in \Omega} | \mu(x) - \nu(x)| \]
Given the convergence of the Markov chain, we define the maximal distance to stationary to be 
\[ d(t) = \max_{x \in \Omega} \| P^t(x, \cdot) - \pi \|_{\TV} \]
where $P^t(x, \cdot)$ is the transition probability of the Markov chain starting in configuration $x$ and $\pi$ is its stationary distribution.
Then, given $\epsilon>0$, the {\bf mixing time} of the Markov chain is defined by
\[ t_{mix}(\epsilon) = \min\{ t : d(t) \leq \epsilon \} \smallskip \]
See \cite{LPW} for a detailed survey on the theory of mixing times. 

\medskip

Rates of mixing times are generally categorized into two groups: {\it rapid mixing} which implies that the mixing time exhibits polynomial growth with respect to the system size $n$, and {\it slow mixing} which implies that the mixing time grows exponentially with the system size.  Determining the parameter regime where a model undergoes rapid mixing is of major importance, as it is in this region that the application of the dynamics is physically feasible.  

\bigskip

\section{Glauber Dynamics Transition Probabilities} \label{sec:glauberprob} 

\medskip

In this section, we show that the update probabilities of the Glauber dynamics introduced in the previous section can be expressed in terms of the derivative of the logarithmic moment generating function of the individual spins $\Gamma$ defined in (\ref{eqn:lmgf}). The partial derivative of $\Gamma$ in the direction of $e^\ell$ has the form 
\[ \left[\partial_{\ell} \Gamma \right] (z) = \frac{\exp\{z_\ell\}}{\sum_{k=1}^q \exp\{z_k\}} \]

\medskip

\noi
We introduce the following function that plays the key role in our analysis.
\be
\label{eqn:gell}
g_{\ell}^{H, \beta}(z) = \left[ \partial_{\ell} \Gamma \right] (-\beta \nabla H(z)) = \frac{\exp\left( -\beta \, [\partial_\ell H](z) \right)}{ \sum_{k=1}^q \exp \left( -\beta \, [\partial_k H](z)  \right) }.
\ee
Denote
\be
\label{eqn:littleG}
g^{H, \beta}(z):=\Big(g_1^{H, \beta}(z),\ldots,g_q^{H, \beta}(z) \Big).
\ee
Note that $g^{H, \beta}(z)$ maps the simplex
$$ \mathcal{P} = \left\{ \nu \in \R^q : \nu = (\nu_1, \nu_2, \ldots, \nu_q), \mbox{each} \ \nu_k \geq 0, \ \sum_{k=1}^q \nu_k = 1 \right\} $$
into itself and it can be expressed in terms of the free energy functional $G_\beta$ defined in (\ref{eqn:fef}) by 
\[ \nabla G_\beta(z) = \beta [ \nabla (-H)^\ast (-\nabla H(z)) - g^{H, \beta}(z) ] \]

\medskip

\begin{lemma}
\label{lemma:transprob}
Let $P(\sigma \goto \sigma_{i, e^k})$ be the Glauber dynamics update probabilities given in (\ref{eqn:tranprob1}).  
Then, for any $k \in \{1,2,\ldots, q \}$,
$$P(\sigma \goto \sigma_{i, e^k}) =  \left[ \partial_k \Gamma \right] \Big(-\beta \nabla H(L_n(\sigma))-{\beta \over 2n}\mathcal{Q}H(L_n(\sigma))+{\beta \over n} \Big< \sigma_i, \mathcal{Q}H(L_n(\sigma))\Big> \sigma_i \Big) + O\left(\frac{1}{n^2} \right),$$
where $\mathcal{Q}$ is the following linear operator:
$$\mathcal{Q} F(z):=\left(\partial_1^2 F(z), ~\partial_2^2 F(z), ~\ldots , ~\partial_q^2 F(z)\right),$$
for any $~F: \mathbb{R}^q \rightarrow \mathbb{R}~$ in $\mathcal{C}^2$.
\end{lemma}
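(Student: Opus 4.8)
The plan is to Taylor-expand the exponents in (\ref{eqn:tranprob1}) to second order and to recognize that, once the common normalizing factors cancel in the ratio, the resulting expression is exactly $[\partial_k\Gamma]$ evaluated at the stated argument. Write $L := L_n(\sigma)$ for brevity. Since each spin is a standard basis vector, updating vertex $i$ from $\sigma_i$ to $e^k$ perturbs the empirical measure by a single vector of order $1/n$:
\[ L_n(\sigma_{i, e^k}) = L + \frac{1}{n}\left(e^k - \sigma_i\right). \]

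First I would Taylor-expand $H$ about $L$ to second order. Because $H(z)=H_1(z_1)+\cdots+H_q(z_q)$ is separable, its Hessian is the diagonal matrix with entries $\partial_j^2 H$, which is precisely $\mathcal{Q}H$, so the expansion reads
\[ H\!\left(L+\tfrac{1}{n}(e^k-\sigma_i)\right) = H(L) + \frac{1}{n}\big\langle\nabla H(L),\,e^k-\sigma_i\big\rangle + \frac{1}{2n^2}\sum_{j=1}^q \partial_j^2 H(L)\,\big(e^k_j-(\sigma_i)_j\big)^2 + O(1/n^3), \]
where the $\mathcal{C}^3$ hypothesis together with the compactness of $\mathcal{P}$ makes the remainder uniform in $k$ and $i$. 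Multiplying by $-\beta n$ and expanding the squared indicator $(e^k_j-(\sigma_i)_j)^2 = e^k_j - 2 e^k_j(\sigma_i)_j + (\sigma_i)_j$, the quadratic contribution splits into three pieces $-\tfrac{\beta}{2n}\partial_k^2 H(L)$, $+\tfrac{\beta}{n}\langle\sigma_i,\mathcal{Q}H(L)\rangle(\sigma_i)_k$, and $-\tfrac{\beta}{2n}\langle\sigma_i,\mathcal{Q}H(L)\rangle$, where I use that $\sigma_i$ is a basis vector so that $e^k_j(\sigma_i)_j$ is supported only where $k$ equals the index of $\sigma_i$.

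The key bookkeeping step is then to separate the $k$-dependent from the $k$-independent contributions. From the linear term, $-\beta\langle\nabla H(L),e^k-\sigma_i\rangle = -\beta[\partial_k H](L) + \beta\langle\nabla H(L),\sigma_i\rangle$, the first summand is the $k$-th coordinate of the leading term $-\beta\nabla H(L)$, while the second is independent of $k$. Among the quadratic pieces, $-\tfrac{\beta}{2n}\partial_k^2 H(L)$ and $+\tfrac{\beta}{n}\langle\sigma_i,\mathcal{Q}H(L)\rangle(\sigma_i)_k$ are the $k$-th coordinates of $-\tfrac{\beta}{2n}\mathcal{Q}H(L)$ and of $+\tfrac{\beta}{n}\langle\sigma_i,\mathcal{Q}H(L)\rangle\sigma_i$ respectively, whereas $-\tfrac{\beta}{2n}\langle\sigma_i,\mathcal{Q}H(L)\rangle$ is again independent of $k$. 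Collecting the $k$-independent terms, together with $-\beta n H(L)$, into a single constant $C$, what remains is exactly the $k$-th coordinate $w_k$ of
\[ w := -\beta\nabla H(L) - \frac{\beta}{2n}\mathcal{Q}H(L) + \frac{\beta}{n}\big\langle\sigma_i,\mathcal{Q}H(L)\big\rangle\,\sigma_i, \]
so that $-\beta n H(L_n(\sigma_{i,e^k})) = C + w_k + O(1/n^2)$ uniformly in $k$.

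Finally I would substitute this into the ratio (\ref{eqn:tranprob1}). Exponentiating gives $\exp\{-\beta n H(L_n(\sigma_{i,e^k}))\} = e^{C}\exp\{w_k\}\big(1+O(1/n^2)\big)$, so the factor $e^{C}$ cancels between numerator and denominator. Since the $w_\ell$ are bounded (the bounds on $\nabla H$ and $\mathcal{Q}H$ over the compact simplex $\mathcal{P}$), the denominator $\sum_\ell\exp\{w_\ell\}$ is bounded below by a positive constant, and the uniform multiplicative errors $1+O(1/n^2)$ propagate through the quotient to yield $\exp\{w_k\}\big/\sum_\ell\exp\{w_\ell\}$ up to an additive $O(1/n^2)$. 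By the explicit form of $\partial_k\Gamma$ recorded just before the lemma, this is precisely $[\partial_k\Gamma](w) + O(1/n^2)$, which is the claim. The only genuinely delicate point is the third paragraph: correctly tracking the three quadratic terms and verifying that the cross term $-2e^k_j(\sigma_i)_j$ produces exactly the self-interaction correction $\tfrac{\beta}{n}\langle\sigma_i,\mathcal{Q}H(L)\rangle\sigma_i$, while the purely $\sigma_i$-dependent quadratic term is absorbed into the normalization $C$; the propagation of the error through the ratio is routine given the regularity of $H$.
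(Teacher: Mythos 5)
Your argument is correct and follows essentially the same route as the paper's proof: a second-order Taylor expansion of $H$ at $L_n(\sigma)$, identification of the $k$-dependent terms as the coordinates of the stated argument of $\partial_k\Gamma$, and cancellation of the $k$-independent factor in the ratio. The only difference is cosmetic: you treat the perturbation $\tfrac{1}{n}(e^k-\sigma_i)$ vectorially via the identity $(e^k_j-(\sigma_i)_j)^2=e^k_j-2e^k_j(\sigma_i)_j+(\sigma_i)_j$, which unifies the cases $k=m$ and $k\neq m$ that the paper handles separately (with $\sigma_i=e^m$, your $\langle\sigma_i,\mathcal{Q}H\rangle\sigma_i$ is exactly the paper's $\partial_m^2H\,e^m$), and you spell out the propagation of the $O(1/n^2)$ error through the quotient, which the paper compresses into one line.
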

\bp
Suppose $\sigma_i=e^m$. By Taylor's theorem, for any $k \not= m$, we have 
\beas
H(L_n(\sigma_{i, e^k})) & = & H(L_n(\sigma)) +H_m\big(L_{n, m} (\sigma) -1/n \big)-H_m\big(L_{n, m} (\sigma) \big) \\
&  & + ~H_k\big(L_{n, k} (\sigma) +1/n \big)-H_k\big(L_{n, k} (\sigma) \big) \\
& = & H(L_n(\sigma)) + \frac{1}{n} \left[ \partial_{k} H(L_n(\sigma)) - \partial_m H(L_n(\sigma)) \right]\\
& & +~{1 \over 2n^2} \left[ \partial^2_{k} H(L_n(\sigma)) + \partial^2_m H(L_n(\sigma)) \right]+ O\left(\frac{1}{n^3} \right). 
\eeas
Now, if $k=m$,
\beas
H(L_n(\sigma_{i, e^k}))  & = & H(L_n(\sigma))\\
& = & H(L_n(\sigma)) + \frac{1}{n} \left[ \partial_{k} H(L_n(\sigma)) -\partial_m H(L_n(\sigma)) \right] \\ 
& & \hsp +{1 \over 2n^2} \left[ -\partial^2_{k} H(L_n(\sigma)) +\partial^2_m H(L_n(\sigma)) \right]. 
\eeas
This implies that the transition probability (\ref{eqn:tranprob1}) has the form
\[
P(\sigma \goto \sigma_{i, e^k}) = \left[ \partial_k \Gamma \right] \Big(-\beta \nabla H(L_n(\sigma))-{\beta \over 2n}\mathcal{Q}H(L_n(\sigma))+{\beta \over n}\partial^2_{m}H(L_n(\sigma))e^m \Big) + O\left(\frac{1}{n^2} \right) 
\]
as $\exp\big\{O\left(\frac{1}{n^2} \right) \big\}=1+O\left(\frac{1}{n^2} \right)$.
\ep

\medskip

\noindent
The above Lemma \ref{lemma:transprob} can be restated as follows using Taylor expansion.
\begin{cor} \label{cor:tp}
Let $P(\sigma \goto \sigma_{i, e^k})$ be the Glauber dynamics update probabilities given in (\ref{eqn:tranprob1}).  
Then, for any $k \in \{1,2,\ldots, q \}$,
$$P(\sigma \goto \sigma_{i, e^k}) = g_k^{H, \beta}(L_n(\sigma)) +{ \beta \over n} \varphi_{k, \sigma_i}^{H, \beta}(L_n(\sigma))+O\left(\frac{1}{n^2} \right),$$
where
$$\varphi_{k, e^r}^{H, \beta}(z):=-{1 \over 2}\Big<\mathcal{Q}H(z),  \left[\nabla \partial_{k} \Gamma \right] (-\beta \nabla H(z)) \Big>+\Big< e^r, \mathcal{Q} H(z)\Big> \Big<e^r,  \left[\nabla \partial_{k} \Gamma \right] (-\beta \nabla H(z)) \Big>  .$$
\end{cor}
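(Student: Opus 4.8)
The plan is to derive the corollary from Lemma~\ref{lemma:transprob} by a single first-order Taylor expansion of $\partial_k\Gamma$. Abbreviating $a := -\beta\,\nabla H(L_n(\sigma))$ and
\[
\delta := -\frac{\beta}{2n}\,\mathcal{Q}H(L_n(\sigma)) + \frac{\beta}{n}\,\big\langle \sigma_i,\,\mathcal{Q}H(L_n(\sigma))\big\rangle\,\sigma_i,
\]
Lemma~\ref{lemma:transprob} reads $P(\sigma\goto\sigma_{i,e^k}) = \left[\partial_k\Gamma\right](a+\delta) + O(1/n^2)$. Since $\delta = O(1/n)$, I would Taylor expand to first order,
\[
\left[\partial_k\Gamma\right](a+\delta) = \left[\partial_k\Gamma\right](a) + \big\langle \left[\nabla\partial_k\Gamma\right](a),\,\delta\big\rangle + O(\|\delta\|^2),
\]
and observe that the zeroth-order term is exactly $g_k^{H,\beta}(L_n(\sigma))$ by the defining identity $g_k^{H,\beta}(z)=\left[\partial_k\Gamma\right](-\beta\nabla H(z))$ in (\ref{eqn:gell}), while the quadratic remainder is $O(1/n^2)$ and is absorbed into the stated error.

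It remains to identify the linear term with $\tfrac{\beta}{n}\varphi_{k,\sigma_i}^{H,\beta}$. Substituting $\delta$ and using bilinearity of the inner product,
\[
\big\langle \left[\nabla\partial_k\Gamma\right](a),\,\delta\big\rangle = \frac{\beta}{n}\left[ -\frac{1}{2}\big\langle \mathcal{Q}H(L_n(\sigma)),\,\left[\nabla\partial_k\Gamma\right](a)\big\rangle + \big\langle \sigma_i,\,\mathcal{Q}H(L_n(\sigma))\big\rangle\,\big\langle \sigma_i,\,\left[\nabla\partial_k\Gamma\right](a)\big\rangle\right].
\]
Recalling that $a=-\beta\nabla H(L_n(\sigma))$ and writing $\sigma_i=e^r$, the bracketed expression is precisely $\varphi_{k,\sigma_i}^{H,\beta}(L_n(\sigma))$ as defined in the statement, so the linear contribution equals $\tfrac{\beta}{n}\varphi_{k,\sigma_i}^{H,\beta}(L_n(\sigma))$ and the three pieces assemble into the claimed expansion.

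The only point requiring genuine care --- and what I would flag as the main, though mild, obstacle --- is the uniformity of the two error terms over all $\sigma\in\Lambda^n$. As $\sigma$ ranges over $\Lambda^n$, the magnetization vector $L_n(\sigma)$ ranges over the compact simplex $\mathcal{P}$; since $H\in\mathcal{C}^3(\R^q)$, the second derivatives comprising $\mathcal{Q}H$ are bounded on $\mathcal{P}$, which yields $\|\delta\|\le C/n$ uniformly in $\sigma$. Likewise $\Gamma$ is smooth with bounded second-order derivatives on the compact image $\{-\beta\nabla H(z): z\in\mathcal{P}\}$, so the first-order Taylor remainder is bounded by $\big(\sup \|D^2\partial_k\Gamma\|\big)\|\delta\|^2 = O(1/n^2)$ uniformly. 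With these uniform bounds in place, the expansion holds with an $O(1/n^2)$ error as asserted, completing the argument.
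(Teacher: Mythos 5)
Your proposal is correct and is exactly the argument the paper intends: the corollary is stated as a restatement of Lemma \ref{lemma:transprob} ``using Taylor expansion,'' and your first-order expansion of $\partial_k\Gamma$ about $-\beta\nabla H(L_n(\sigma))$, with the linear term identified as $\tfrac{\beta}{n}\varphi_{k,\sigma_i}^{H,\beta}(L_n(\sigma))$, fills in precisely the omitted computation. Your added remarks on uniformity of the $O(1/n^2)$ remainder over the compact simplex are a welcome (if implicit in the paper) piece of care.
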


\medskip

As indicated in the title of the paper, we employ a coupling method for proving rapid mixing of Glauber dynamics of Gibbs ensembles.  In the next section, we define the specific coupling used.
 
\bigskip

\section{Coupling of Glauber Dynamics}
\label{glauber}

\medskip

We begin by defining a metric on the configuration space $\Lambda^n$.  For two configurations $\sigma$ and $\tau$ in $\Lambda^n$, define
\be
\label{eqn:pathmetric} 
d(\sigma, \tau) = \sum_{j=1}^n 1\{ \sigma_j \neq \tau_j \} 
\ee
which yields the number of vertices at which the two configurations differ.

\medskip

Let $X^t$ and $Y^t$ be two copies of the Glauber dynamics. Here, we use the standard greedy coupling of $X^t$ and $Y^t$.  At each time step a vertex is selected at random, uniformly from the $n$ vertices.     Suppose $X^t=\sigma$, $~Y^t=\tau$, and the vertex selected is denoted by $j$. Next, we erase the spin at location $j$ in both processes, and replace it with a new one according to the following update probabilities. For all $\ell = 1, 2, \ldots, q$, define
\[ p_\ell = P(\sigma \goto \sigma_{j, e^\ell}) \quad \mbox{and} \quad q_\ell = P(\tau \goto \tau_{j, e^\ell}) \]
and let
\[ P_\ell = \min\{ p_\ell, q_\ell \} \hsp \mbox{and} \hsp P = \sum_{\ell = 1}^q P_\ell . \]
Now, let $B$ be a Bernoulli random variable with probability of success $P$.  If $B = 1$, we update the two chains equally with the following probabilities
\[ P(X_j^{t+1} = e^\ell, Y_j^{t+1} = e^\ell \, | \, B=1) = \frac{P_\ell}{P} \]
for $\ell = 1, 2, \ldots, q$.  On the other hand, if $B= 0$, we update the chains differently according to the following probabilities
\[ P(X_j^{t+1} = e^\ell, Y_j^{t+1} = e^m \, | \, B=0) = \frac{p_\ell - P_\ell}{1-P} \cdot \frac{q_m - P_m}{1-P} \] 
for all pairs $\ell \neq m$.  Then the total probability that the two chains update the same is equal to $P$ and
the total probability that the chains update differently is equal to $1-P$.

\medskip

\noi
Observe that once $X^t=Y^t$, the processes remain matched (coupled) for the rest of the time. In the coupling literature, the time 
$$\min\{t \geq 0 ~:~ X^t=Y^t\}$$
is refered to as the {\it coupling time}.

\medskip

The mean coupling distance  $~\mathbb{E}[d(X^t, Y^t) ]~$ is tied to the total variation distance via the following inequality known as  the {\it coupling inequality:} 
\be \label{coupling_ineq} 
\| P^t(x, \cdot) - P^t(y, \cdot) \|_{{\scriptsize \mbox{TV}}} \leq P(X^t \neq Y^t)  \leq \mathbb{E}[d(X^t, Y^t) ] 
\ee
The above inequality implies that the order of the mean coupling time is an upper bound on the order of the mixing time. See \cite{L} and \cite{LPW} for details on coupling and coupling inequalities.

\medskip

\section{Mean Coupling Distance} \label{sec:mean} 

\medskip
Fix $~\varepsilon>0$. Consider two configurations $\sigma$ and $\tau$ such that
$$d(\sigma,\tau)=d,$$
where $d(\sigma, \tau) \in \mathbb{N}$ is the metric defined in (\ref{eqn:pathmetric}) and $\ve \leq \|L_n(\sigma)-L_n(\tau)\|_1 < 2\varepsilon$.

\medskip
Let $\mathcal{I}=\{i_1,\hdots,i_d\}$ be the set of vertices at which the spin values of the two configurations $\sigma$ and $\tau$ disagree.  Define $\kappa(e^\ell)$ to be the probability that the coupled processes update differently when the chosen vertex $j\not\in \mathcal{I}$ has spin $e^\ell$. If the chosen vertex $j$ is such that $\sigma_j=\tau_j=e^\ell$, then expressing $\kappa(e^\ell)$ by total variation distance and by Corollary \ref{cor:tp} of  Lemma \ref{lemma:transprob},

\bea\label{eqn:true} \nonumber
 \kappa(e^\ell) & :=  & {1 \over 2}\sum_{k=1}^q \Big|P(\sigma \goto \sigma_{j, e^k}) -P(\tau \goto \tau_{j, e^k}) \Big|\\ \nonumber
 & & \\ \nonumber
& =  & {1 \over 2}\sum_{k=1}^q \Big|  \Big(g_k^{H, \beta}(L_n(\sigma)) +{ \beta \over n} \varphi_{k, e^\ell}^{H, \beta}(L_n(\sigma)) \Big)
- \Big(g_k^{H, \beta}(L_n(\tau)) +{ \beta \over n} \varphi_{k, e^\ell}^{H, \beta}(L_n(\tau)) \Big)  \Big| +O\left( \frac{1}{n^2} \right)\\ 
 & & \\ \nonumber
& =  & {1 \over 2}\sum_{k=1}^q \Big|  g_k^{H, \beta}(L_n(\sigma)) - g_k^{H, \beta}(L_n(\tau))   \Big| +O\left(\frac{\varepsilon}{n}+ \frac{1}{n^2} \right). \nonumber
\eea

\medskip
\noindent
Next, we observe that for any $\mathcal{C}^2$ function $f:~\mathcal{P} \rightarrow \mathbb{R}$, there exists $C>0$ such that
\be
\left| f(z') - f(z) - \Big<z' - z, \nabla f(z) \Big>~\right| < C\varepsilon^2
\ee
for all $z, z' \in \mathcal{P}$ satisfying $\varepsilon \leq \|z'-z\|_1 < 2\varepsilon$.

\bigskip
\noindent
Therefore for $n$ large enough, there exists $C'>0$ such that
\begin{equation}\label{eqn:rho}
\left| \kappa(e^\ell) ~-~ {1 \over 2}\sum_{k=1}^q \Big| \Big<L_n(\tau) - L_n(\sigma), \nabla g_k^{H, \beta}(L_n(\sigma)) \Big> \Big| \right| ~<C'\varepsilon^2.
\end{equation}
The above result holds regardless of the value of $\ell \in \{1,2,\dots,q\}$. 

\medskip
\noindent
Similarly, when the chosen vertex $j \in \mathcal{I}$, the probability of not coupling at $j$ satisfies (\ref{eqn:rho}).

\bigskip
\noindent
We conclude that in terms of $\kappa_{\sigma,\tau} := {1 \over 2}\sum_{k=1}^q \Big| \Big<L_n(\tau) - L_n(\sigma), \nabla g_k^{H, \beta}(L_n(\sigma)) \Big> \Big|$, the mean distance between a coupling of the Glauber dynamics starting in $\sigma$ and $\tau$ with $d(\sigma, \tau) = d$ after one step has the form
\begin{eqnarray} \label{eqn:grad}
\mathbb{E}_{\sigma,\tau}[d(X,Y)] & \leq  & d-{d \over n} (1-\kappa_{\sigma,\tau})+{n-d \over n}\kappa_{\sigma,\tau} +c \, \varepsilon^2 \nonumber \\
&  = & d \cdot \left[1-{1 \over n}\left(1-{\kappa_{\sigma,\tau}+c \, \varepsilon^2  \over d/n} \right)  \right]
\end{eqnarray} 
for a fixed $c>0$ and all $\ve$ small enough.

\medskip

\section{Aggregate Path Coupling} \label{sec:agg}

\medskip

In the previous section, we derived the form of the mean distance between a coupling of the Glauber dynamics starting in two configurations whose distance is bounded.  We next derive the form of the mean coupling distance of a coupling starting in two configurations that are connected by a path of configurations where the distance between successive configurations are bounded.   

\begin{defn}
Let $\sigma$ and $\tau$ be configurations in $\Lambda^n$. We say that a path $\pi$ connecting configurations $\sigma$ and $\tau$ denoted by 
$$\pi: \ \sigma = x_0, x_1, \ldots, x_r = \tau ,$$
is a {\bf monotone path} if
\begin{itemize}
\item[(i)] $~\sum\limits_{i=1}^r d(x_{i-1},x_i) = d(\sigma,\tau)$

\item[(ii)] for each $k=1, 2, \ldots, q$, the $k$th coordinate of $L_n(x_i)$, $L_{n,k}(x_i)$ is monotonic as $i$ increases from $0$ to $r$;\\
\end{itemize}
\end{defn}
\noi
Observe that here the points $x_i$ on the path are not required to be nearest-neighbors.

\medskip

\noi
A straightforward property of monotone paths is that 
\[ \sum_{i=1}^r \sum_{k=1}^q | L_{n,k}(x_i) - L_{n,k}(x_{i-1}) | = \| L_n(\sigma) - L_n(\tau) \|_1   \]
Another straightforward observation is that for any given path
$$L_n(\sigma)=z_0, z_1,\hdots,z_r=L_n(\tau)$$
in $\mathcal{P}_n$, monotone in each coordinate, with $~\|z_i-z_{i-1}\|_1>0$ for all $i \in \{1,2,\hdots,r\}$, there exists a monotone path
$$\pi: \ \sigma = x_0, x_1, \ldots, x_r = \tau$$
such that $L_n(x_i)=z_i$ for each $i$.

\medskip

Let $\pi : \sigma = x_0, x_1, \hdots, x_r = \tau$ \ be a monotone path connecting configurations $\sigma$ and $\tau$ such that $\varepsilon \leq \|L_n(x_i) - L_n(x_{i-1}) \|_1 < 2\varepsilon $ for all $i = 1, \ldots, r$.   Equation (\ref{eqn:grad}) implies the following bound on the mean distance between a coupling of the Glauber dynamics starting in configurations $\sigma$ and $\tau$:

{\small
\bea \label{eqn:meandist}
\lefteqn{ \mathbb{E}_{\sigma, \tau}[d(X,Y)] } \nonumber \\
& \leq & \sum_{i=1}^r \mathbb{E}_{x_{i-1}, x_i}[d(X_{i-1}, X_i)]   \nonumber \\
 & \leq &  \sum\limits_{i=1}^r \left\{ d(x_{i-1},x_i) \cdot \left[1 -{1 \over n} \left(1-{{1 \over 2} \sum\limits_{k=1}^q \Big| \Big<L_n(x_i) - L_n(x_{i-1}), \nabla g_k^{H, \beta}(L_n( x_{i-1})) \Big> \Big|+c \varepsilon^2 \over d(x_{i-1},x_i)/n} \right)  \right]\right\}  \nonumber \\ 
& = & d(\sigma,\tau) \left[ 1-{1 \over n}\left(1-{\sum\limits_{k=1}^q  \sum\limits_{i=1}^r \Big| \Big<L_n(x_i) - L_n(x_{i-1}), \nabla g_k^{H, \beta}(L_n( x_{i-1})) \Big> \Big|+c \varepsilon^2 \over  2d(\sigma,\tau)/n }\right) \right]  \nonumber \\ 
 & & \\
& \leq & d(\sigma,\tau) \left[ 1-{1 \over n}\left(1-{\sum\limits_{k=1}^q  \sum\limits_{i=1}^r \Big| \Big<L_n(x_i) - L_n(x_{i-1}), \nabla g_k^{H, \beta}(L_n( x_{i-1})) \Big> \Big|+c \varepsilon^2 \over  \|L_n(\sigma)-L_n(\tau)\|_1 }\right) \right], \nonumber
\eea
}

\noindent
as $~\sum\limits_{i=1}^r d(x_{i-1},x_i) = d(\sigma,\tau)$.

\medskip

From inequality (\ref{eqn:meandist}), if there exists monotone paths between all pairs of configurations such that there is a uniform bound less than $1$ on the ratio 
\[ {\sum_{k=1}^q  \sum_{i=1}^r \Big| \Big<L_n(x_i) - L_n(x_{i-1}), \nabla g_k^{H, \beta}(L_n( x_{i-1})) \Big> \Big| \over  \|L_n(\sigma)-L_n(\tau)\|_1 } \]
then the mean coupling distance contracts which yields a bound on the mixing time via coupling inequality (\ref{coupling_ineq}).

\medskip

Although the Gibbs measure are distributions of the empirical measure $L_n$ defined on the discrete space $\mathcal{P}_n$, proving contraction of the mean coupling distance is often facilitated by working in the continuous space, namely the simplex $\mathcal{P}$.  We begin our discussion of aggregate path coupling by defining distances along paths in $\mathcal{P}$.  

\medskip

Recall the function $g^{H,\beta}$ defined in (\ref{eqn:littleG}) which is dependent on the Hamiltonian of the model through the interaction representation function $H$ defined in Definition \ref{defn:irf}. 

\begin{defn}
Define the {\bf aggregate $g$-variation} between a pair of points $x$ and $z$ in $\mathcal{P}$ along a continuous monotone  (in each coordinate) path $\rho$ to be 
$$ D_\rho^g (x, z) := \sum\limits_{k=1}^q\int\limits_{\rho} \Big| \Big<\nabla g_k^{H, \beta}(y), dy \Big> \Big| $$
Define the corresponding {\bf pseudo-distance} between a pair of points points $x$ and $z$ in $\mathcal{P}$ as
$$d_g(x,z) :=\inf_{\rho}D_\rho^g (x, z),$$
where the infimum is taken over all continuous monotone paths in $\mathcal{P}$ connecting $x$ and $z$. 
\end{defn}

Notice if the monotonicity restriction is removed, the above infimum would satisfy the triangle inequality.
We will need the following condition.
\begin{cond} \label{uniform}
Let $z_\beta$ be the unique equilibrium macrostate. There exists $\delta \in (0,1)$ such that
$${d_g(z,z_\beta) \over \|z-z_\beta\|_1} \leq 1-\delta$$
for all $z$ in $\mathcal{P}$.
\end{cond}
\noindent
Observe that  if it is shown that $~d_g(z,z_\beta) < \|z-z_\beta\|_1$ for all $z$ in $\mathcal{P}$, then by continuity the above condition is equivalent to
$$\limsup_{z \rightarrow z_\beta} {d_g(z,z_\beta) \over \|z-z_\beta\|_1} <1$$

\medskip
\noindent
Suppose Condition \ref{uniform} is satisfied. Then let denote by ${\bf NG}_\delta$ the family of {\bf neo-geodesic} smooth curves, monotone in each coordinate such that for each $z \not= z_\beta$ in $\mathcal{P}$, there is exactly one curve $\rho=\rho_z$ in the family  ${\bf NG}_\delta$ connecting $z_\beta$ to $z$, and
$${D_\rho^g (z, z_\beta) \over \|z-z_\beta\|_1} \leq 1-\delta/2$$

\medskip
\noindent
\begin{cond}\label{RS}
For $\varepsilon>0$ small enough, there exists a neo-geodesic family ${\bf NG}_\delta$ such that for each $z$ in $\mathcal{P}$ satisfying $\|z-z_\beta\|_1 \geq \varepsilon$ , the curve  $\rho=\rho_z$ in the family  ${\bf NG}_\delta$ that connects $z_\beta$ to $z$ satisfies
$${\sum_{k=1}^q  \sum_{i=1}^r \Big| \Big<z_i - z_{i-1}, \nabla g_k^{H, \beta}(z_{i-1}) \Big> \Big| \over \|z-z_\beta\|_1} \leq 1-\delta/3$$
for a sequence of points $z_0=z_\beta,z_1,\hdots,z_r=z$ interpolating $\rho$ such that
$$\varepsilon \leq \|z_i - z_{i-1}\|_1  < 2\varepsilon \quad \text{ for } i=1,2,\hdots, r.$$
\end{cond}

\medskip
\noindent
It is important to observe that Condition \ref{uniform} is often simpler to verify than Condition \ref{RS}. Moreover,  under certain simple additional prerequisites, Condition \ref{uniform} implies Condition \ref{RS}. For example, this is achieved if there is a uniform bound on the Cauchy curvature at every point of every curve in ${\bf NG}_\delta$. So it will be demonstrated on the example of Curie-Weiss-Potts model that  the natural way for  establishing  Condition \ref{RS} for the model is via first establishing Condition \ref{uniform}.

\medskip

In addition to Condition \ref{RS} that will be shown to imply contraction when one of the two configurations in the coupled processes is at the equilibrium, i.e. $L_n(\sigma)=z_\beta$ , we need a condition that will imply contraction between two configurations within a neighborhood of the equilibrium configuration.  We state this assumption next.  

\begin{cond}
\label{ass:localbound}
Let $z_\beta$ be the unique equilibrium macrostate. Then,
$$\limsup\limits_{z \rightarrow z_\beta} {\|g^{H, \beta}(z)-g^{H, \beta}(z_\beta)\|_1 \over \|z-z_\beta\|_1} <1.$$
\end{cond}

\noindent
Since $H(z) \in \mathcal{C}^3$, the above Condition \ref{ass:localbound} implies that for any $\ve>0$ sufficiently small, there exists $\gamma \in (0,1)$ such that
$$ {\|g^{H, \beta}(z)-g^{H, \beta}(w)\|_1 \over \|z-w\|_1} <1-\gamma$$
for all $z$ and $w$ in $\mathcal{P}$ satisfying
$$\|z-z_\beta\|_1<\ve \quad \text{ and }\quad \|w-z_\beta\|_1<\ve.$$

\bigskip

\section{Main Result} \label{sec:main} 

\medskip

A sufficient condition for rapid mixing of the Glauber dynamics of Gibbs ensembles is contraction of the mean coupling distance $\mathbb{E}_{\sigma, \tau}[d(X,Y)]$ between coupled processes starting in all pairs of configurations in $\Lambda^n$.  The classical path coupling argument \cite{BD} is a method of obtaining this contraction by only proving contraction between couplings starting in \underline{neighboring} configurations.  However for some classes of models (e.g. models that undergo a first-order, discontinuous phase transition) there are situations when Glauber dynamics exhibits rapid mixing, but coupled processes do not exhibit contraction between some neighboring configurations. Such models include the mean-field Blume-Capel (in the discontinuous phase transition region) and Curie-Weiss-Potts models. A major strength of the {\bf aggregate path coupling} method introduced in \cite{KOT} for mean-field Blume-Capel model and further expanded in this study is that, in addition to its generality, it yields a proof for rapid mixing even in those cases when contraction of the mean distance between couplings starting in all pairs of neighboring configurations does not hold.

\medskip

The strategy is to take advantage of the large deviations estimates discussed in section \ref{ldp}.  Recall from that section that we assume that the set of equilibrium macrostates $\mathcal{E}_\beta$, which can be expressed in the form given in (\ref{eqn:fefeqst}), consists of a single point $z_\beta$.  Define an {\it equilibrium configuration} $\sigma_\beta$ to be a configuration such that 
\[ L_n(\sigma_\beta) = z_\beta = ((z_\beta)_1, (z_\beta)_2, \ldots, (z_\beta)_q). \]  
First we observe that in order to use the coupling inequality (\ref{coupling_ineq}) we need to show contraction of the mean coupling distance $\mathbb{E}_{\sigma, \tau}[d(X,Y)]$ between a  Markov chain initially distributed according to the stationary probability distribution $ P_{n, \beta}$ and a Markov chain starting at any given configuration.
Using large deviations we know that with high probability the former process starts near the equilibrium and stays near the equilibrium for long duration of time.

Our main result Theorem \ref{thm:main}  states that once we establish contraction of the mean coupling distance between two copies of a Markov chain where one of the coupled dynamics starts near an equilibrium configuration in Lemma \ref{lemma:contract}, then this contraction, along with the large deviations estimates of the empirical measure $L_n$, yields rapid mixing of the Glauber dynamics converging to the Gibbs measure. 

\medskip

Now, the classical path coupling relies on showing contraction along any monotone path connecting two configurations, in one time step. Here we observe that we only need to show contraction along \underline{one} monotone path connecting two configurations in order to have the mean coupling distance $\mathbb{E}_{\sigma, \tau}[d(X,Y)]$ contract in a single time step. However, finding even one monotone path with which we can show contraction in the equation (\ref{eqn:meandist}) is not easy. The answer to this is in finding a monotone path $\rho$ in $\mathcal{P}$ connecting the $L_n$ values of the two configurations, $\sigma$ and $\tau$, such that 
$${\sum\limits_{k=1}^q\int\limits_{\rho} \Big| \Big<\nabla g_k^{H, \beta}(y), dy \Big> \Big| \over  \|L_n(\sigma)-L_n(\tau)\|_1 } ~<1 $$
Although $\rho$ is a continuous  path in continuous space $\mathcal{P}$, it serves as Ariadne's thread for finding a monotone path
$$\pi:~\sigma=x_0,~x_1,\hdots,x_r=\tau$$
such that $L_n(x_0),~L_n(x_1),\hdots,L_n(x_r)$ in $\mathcal{P}_n$ are positioned along $\rho$, and
$$\sum\limits_{k=1}^q  \sum\limits_{i=1}^r \Big| \Big<L_n(x_i) - L_n(x_{i-1}), \nabla g_k^{H, \beta}(L_n( x_{i-1})) \Big> \Big|$$
is a Riemann sum approximating $~\sum\limits_{k=1}^q\int\limits_{\rho} \Big| \Big<\nabla g_k^{H, \beta}(y), dy \Big> \Big|$. Therefore we obtain
$${\sum\limits_{k=1}^q  \sum\limits_{i=1}^r \Big| \Big<L_n(x_i) - L_n(x_{i-1}), \nabla g_k^{H, \beta}(L_n( x_{i-1})) \Big> \Big| \over  \|L_n(\sigma)-L_n(\tau)\|_1 } ~<1,$$
that in turn implies contraction in (\ref{eqn:meandist}) for $\ve$ small enough and $n$ large enough. See Figure \ref{fig:simplex}.

\begin{figure} 
\includegraphics[scale=0.4]{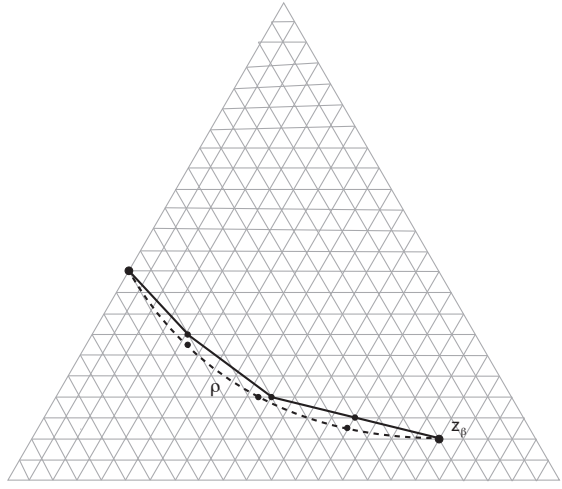}
\caption{Case $q=3$. Dashed curve is the continuous monotone path $\rho$. Solid lines represent the path $L_n(x_0),~L_n(x_1),\hdots,L_n(x_r)$ in $\mathcal{P}_n$.}
\label{fig:simplex}
\end{figure}

Observe that in order for the above argument to work, we need to spread points $L_n(x_i) \in \mathcal{P}_n$ along a continuous path $\rho$ at intervals of fixed order $\ve$.  Thus $\pi$ has to be {\bf not a nearest-neighbor path} in the space of configurations, another significant deviation from the classical path coupling.

\medskip

\begin{lemma}
\label{lemma:contract}
Assume Condition \ref{RS} and Condition \ref{ass:localbound}. Let $(X,Y)$ be a coupling of the Glauber dynamics as defined in Section \ref{glauber}, starting in configurations $\sigma$ and $\tau$ and let $z_\beta$ be the single equilibrium macrostate of the corresponding Gibbs ensemble. 
Then there exists an $\alpha>0$ and an $\varepsilon'>0$ small enough such that for $n$ large enough,
\[ \mathbb{E}_{\sigma,\tau} [ d(X,Y)] \leq e^{-\alpha/n} d(\sigma, \tau) \]
whenever $\|L_n(\sigma) -z_\beta \|_1<\varepsilon' $.
\end{lemma}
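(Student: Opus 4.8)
The plan is to reduce the assertion, via the one-step bounds (\ref{eqn:grad}) and (\ref{eqn:meandist}), to exhibiting a single monotone path $\pi : \sigma = x_0, x_1, \ldots, x_r = \tau$ along which the ratio
\[
R(\pi) := \frac{\sum_{k=1}^q \sum_{i=1}^r \big| \big\langle L_n(x_i) - L_n(x_{i-1}), \nabla g_k^{H, \beta}(L_n(x_{i-1})) \big\rangle \big| + c\varepsilon^2}{\|L_n(\sigma) - L_n(\tau)\|_1}
\]
is bounded by $1 - \delta'$ for some $\delta' > 0$ that does not depend on $\tau$. Once $R(\pi) \leq 1 - \delta'$, inequality (\ref{eqn:meandist}) gives $\mathbb{E}_{\sigma,\tau}[d(X,Y)] \leq d(\sigma,\tau)(1 - \delta'/n) \leq e^{-\delta'/n} d(\sigma,\tau)$, since $1 - x \leq e^{-x}$, which is the claim with $\alpha = \delta'$. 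I would fix the step scale $\varepsilon$ small enough that the consequences of both Condition \ref{RS} and Condition \ref{ass:localbound} are available, then choose $\varepsilon' \ll \varepsilon$, and only then take $n$ large; the argument splits according to whether $\tau$ is near or far from the equilibrium $z_\beta$.

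\emph{Case 1 (both configurations near $z_\beta$).} When $\|L_n(\tau) - z_\beta\|_1 < \varepsilon$, both empirical vectors lie in the $\varepsilon$-ball about $z_\beta$. Here I would not pass through the linearized path sum at all, because the additive $c\varepsilon^2$ in (\ref{eqn:grad}) swamps the signal $\kappa \sim 1/n$ when $\sigma$ and $\tau$ differ in only a few vertices. Instead I would use the exact one-step non-coupling probability from (\ref{eqn:true}), namely $\kappa = \tfrac12 \|g^{H,\beta}(L_n(\sigma)) - g^{H,\beta}(L_n(\tau))\|_1 + O(\|L_n(\sigma)-L_n(\tau)\|_1/n + 1/n^2)$, the consequence of Condition \ref{ass:localbound} that $\|g^{H,\beta}(z) - g^{H,\beta}(w)\|_1 < (1-\gamma)\|z-w\|_1$ on this ball, and the elementary inequality $d(\sigma,\tau)/n \geq \tfrac12 \|L_n(\sigma)-L_n(\tau)\|_1$. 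Feeding these into the exact recursion $\mathbb{E}_{\sigma,\tau}[d(X,Y)] \leq d(\sigma,\tau) - d(\sigma,\tau)/n + \kappa$ yields contraction at rate $\gamma/2$ for $n$ large, uniformly over all such $\tau$.

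\emph{Case 2 (the far regime, $\|L_n(\tau) - z_\beta\|_1 \geq \varepsilon$).} Here $\|L_n(\sigma)-L_n(\tau)\|_1 \geq \varepsilon - \varepsilon' \geq \varepsilon/2$, and I would invoke Condition \ref{RS} at $z = L_n(\tau)$ to obtain the neo-geodesic $\rho$ from $z_\beta$ to $L_n(\tau)$ together with interpolating points $z_0 = z_\beta, \ldots, z_r = L_n(\tau)$ of spacing $\varepsilon \leq \|z_i - z_{i-1}\|_1 < 2\varepsilon$ and Riemann-sum ratio at most $1 - \delta/3$. The work is to transplant this bound to a path that actually starts at $L_n(\sigma)$. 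I would re-root $\rho$ by the shear $\tilde\rho(s) = \rho(s) + (1-s)(L_n(\sigma) - z_\beta)$, which fixes the endpoints $L_n(\sigma)$ and $L_n(\tau)$, preserves the affine constraint $\sum_k \nu_k = 1$, and stays within $\ell^1$-distance $\varepsilon'$ of $\rho$. Since $\nabla g_k^{H,\beta}$ is Lipschitz ($H \in \mathcal{C}^3$), the aggregate $g$-variation moves by only $O(\varepsilon')$. Discretizing $\tilde\rho$ into a monotone path $\pi$ in $\Lambda^n$ whose $L_n$-images sit along $\tilde\rho$ at spacing of order $\varepsilon$ turns the integral into the Riemann sum in $R(\pi)$ with an additional lattice error of order $r/n$; after dividing by the distance $\geq \varepsilon/2$, these perturbations contribute $O(\varepsilon'/\varepsilon) + O(\varepsilon) + O(1/(n\varepsilon^2))$, so $R(\pi) \leq 1 - \delta/3 + \delta/4 = 1 - \delta/12$ once $\varepsilon'/\varepsilon$ and $1/(n\varepsilon^2)$ are small. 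Taking $\delta' = \min(\gamma/2, \delta/12)$ merges the two cases.

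The main obstacle is the re-rooting in Case 2: Condition \ref{RS} supplies good monotone paths emanating \emph{exactly} from $z_\beta$, whereas the coupled chain starts at $L_n(\sigma) \neq z_\beta$, and because the monotonicity restriction destroys the triangle inequality for $d_g$ I cannot simply concatenate a short segment $L_n(\sigma) \to z_\beta$ with the neo-geodesic. The delicate point in the shear construction is verifying that $\tilde\rho$ stays monotone in every coordinate and inside the simplex: a coordinate whose total increment along $\rho$ is itself only of order $\varepsilon'$ could have its monotonicity reversed by the shift, and near the far endpoint some coordinates of $\rho$ may approach the boundary. Controlling these coordinates — for instance by routing the re-rooting displacement only through coordinates whose increments are bounded below, or by selecting the family ${\bf NG}_\delta$ so that its initial segment is radial from $z_\beta$ — is precisely where the quantitative separation $\varepsilon' \ll \varepsilon$ and the uniform bound on the Cauchy curvature of ${\bf NG}_\delta$ noted after Condition \ref{RS} must be used, and this is the step I expect to require the most care.
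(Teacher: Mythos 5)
Your proposal is correct and follows essentially the same route as the paper: the same split into a near-equilibrium case handled directly via Condition \ref{ass:localbound} applied to $\|g^{H,\beta}(L_n(\sigma))-g^{H,\beta}(L_n(\tau))\|_1$, and a far case in which the neo-geodesic of Condition \ref{RS} from $z_\beta$ to $L_n(\tau)$ is re-rooted at $L_n(\sigma)$ at a cost of order $\varepsilon'/\varepsilon$ in the contraction ratio, with $\varepsilon'\ll\varepsilon$ chosen so this cost is dominated by $\delta$. The monotonicity issue you flag in the re-rooting is exactly the point the paper addresses, there by perturbing the discrete path $\pi'$ directly so that each coordinate increment of the new path for $i\geq 2$ has the same sign as, and is dominated in magnitude by, the corresponding increment of $\pi'$ (absorbing the displacement $L_n(\sigma)-z_\beta$ of size at most $\varepsilon'=\varepsilon^2\delta/M$ across the $r\leq 1/\varepsilon$ steps), rather than by shearing the continuous curve.
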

\bp
Let $\ve$ and $\delta$ be as in Condition \ref{RS}, and let $\ve'=\ve^2 \delta/M$ with a constant $M\gg0$. 

\medskip
\noindent
{\bf Case I.} Suppose $L_n(\tau)=z$ and $L_n(\sigma)=w$, where $\|z-z_\beta\|_1 \geq \ve$ and $\|w-z_\beta\|_1< \ve'$.

\medskip
\noindent
Then there is an equlibrium configuration $\sigma_\beta$ with $L_n(\sigma_\beta)=z_\beta$ such that there is a {\it monotone path}
$$\pi':~\sigma_\beta=x'_0,~x'_1,\hdots,x'_r=\tau$$
connecting configurations $\sigma_\beta$ and $\tau$ on $\Lambda^n$ such that
$\ve  \leq\|L_n(x'_i) - L_n(x'_{i-1})\|_1<2\ve$, and by Condition \ref{RS}, 
$${\sum\limits_{k=1}^q  \sum\limits_{i=1}^r \Big| \Big<L_n(x'_i) - L_n(x'_{i-1}), \nabla g_k^{H, \beta}(L_n( x'_{i-1})) \Big> \Big| \over  \|L_n(\sigma_\beta)-L_n(\tau)\|_1 } \leq 1-\delta/4$$
for $n$ large enough. Note that the difference between the above inequality and Condition \ref{RS} is that here we take $L_n(x'_i) \in \mathcal{P}_n$.

\medskip
\noindent
Now,  there exists a monotone path with from $\sigma$ to $\tau$
$$\pi:~\sigma=x_0,~x_1,\hdots,x_r=\tau$$
such that 
$$\|L_n(x_i)-L_n(x'_i)\|_1 \leq \ve' \qquad \text{ for all }i=0,1,\hdots,r.$$
The new monotone path $\pi$ is constructed from $\pi'$ by insuring that either
$$0 \leq \Big<L_n(x_i)-L_n(x_{i-1}),e^k\Big> \leq  \Big<L_n(x'_i)-L_n(x'_{i-1}),e^k\Big> $$
or
$$ \Big<L_n(x'_i)-L_n(x'_{i-1}),e^k\Big>  \leq  \Big<L_n(x_i)-L_n(x_{i-1}),e^k\Big>  \leq 0$$
for $i=2,\hdots,r$ and each coordinate $k \in \{1,2,\hdots,q\}$.

\medskip
\noindent
Then
{\footnotesize
$$\left|{\sum\limits_{k=1}^q  \sum\limits_{i=1}^r \Big| \Big<L_n(x_i) - L_n(x_{i-1}), \nabla g_k^{H, \beta}(L_n( x_{i-1})) \Big> \Big| \over  \|L_n(\sigma)-L_n(\tau)\|_1 }
-{\sum\limits_{k=1}^q  \sum\limits_{i=1}^r \Big| \Big<L_n(x'_i) - L_n(x'_{i-1}), \nabla g_k^{H, \beta}(L_n( x'_{i-1})) \Big> \Big| \over  \|L_n(\sigma_\beta)-L_n(\tau)\|_1 }\right| \leq C'' r \ve' / \ve$$
}
\noi
for a fixed constant $C''>0$. Noticing that $~r\ve'/\ve \leq \delta /M~$ as $~r \leq 1/\ve$, and taking $M$ large enough, we obtain
$${\sum\limits_{k=1}^q  \sum\limits_{i=1}^r \Big| \Big<L_n(x_i) - L_n(x_{i-1}), \nabla g_k^{H, \beta}(L_n( x_{i-1})) \Big> \Big| \over  \|L_n(\sigma)-L_n(\tau)\|_1 } \leq 1-\delta/4.$$

\medskip
\noindent
Thus equation (\ref{eqn:meandist}) will imply 

{\small
\beas
\mathbb{E}_{\sigma, \tau}[d(X,Y)] & \leq & d(\sigma,\tau) \left[ 1-{1 \over n}\left(1-{\sum\limits_{k=1}^q  \sum\limits_{i=1}^r \Big| \Big<L_n(x_i) - L_n(x_{i-1}), \nabla g_k^{H, \beta}(L_n( x_{i-1})) \Big> \Big|+c \, \varepsilon^2 \over  \|L_n(\sigma)-L_n(\tau)\|_1 }\right) \right] \\
& \leq &  d(\sigma,\tau) \left[ 1 -{1 \over n}\big(1- (1-\delta/4)-\delta/20 \big) \right]\\
& = &  d(\sigma,\tau) \left[ 1 -{1 \over n}\delta/5 \right]
\eeas
}
as $~{c \, \varepsilon^2 \over  \|L_n(\sigma)-L_n(\tau)\|_1 } ~\leq ~c \, \ve ~\leq ~\delta/20~$ for 
$\ve$ small enough.\\

\bigskip
\noindent
{\bf Case II.} Suppose $L_n(\tau)=z$ and $L_n(\sigma)=w$, where $\|z-z_\beta\|_1 < \ve$ and $\|w-z_\beta\|_1< \ve'$.

\medskip
\noindent
Similarly to (\ref{eqn:grad}), equation (\ref{eqn:true}) implies for $n$ large enough,
\beas
\mathbb{E}[d(X,Y)] &  \leq & d(\sigma,\tau) \cdot \left[1-{1 \over n}\left(1-{\|g^{H,\beta}\big(L_n(\sigma)\big)-g^{H,\beta}\big(L_n(\tau)\big)\|_1 \over  \|L_n(\sigma)-L_n(\tau)\|_1} \right)  \right] + O\left( {1 \over n^2}\right)\\
&  \leq & d(\sigma,\tau) \cdot \left[1-{\gamma \over n} \right] + O\left( {1 \over n^2}\right)\\
&  \leq & d(\sigma,\tau) \cdot \left[1-{\gamma \over 2n} \right] 
\eeas
 by Condition \ref{ass:localbound} (see also discussion following Condition \ref{ass:localbound}).
\ep

We now state and prove the main theorem of the paper that yields sufficient conditions for rapid mixing of the Glauber dynamics of the class of statistical mechanical models discussed.

\begin{thm}
\label{thm:main}
Suppose $H(z)$ and $\beta>0$ are such that  Condition \ref{RS} and Condition \ref{ass:localbound} are satisfied. Then the mixing time of the Glauber dynamics satisfies
$$t_{mix} = O(n \log n)$$
\end{thm}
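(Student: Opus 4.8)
The plan is to combine the one-step contraction of Lemma \ref{lemma:contract}, which holds only when one of the two coupled configurations sits near the equilibrium $z_\beta$, with the large deviation concentration (\ref{eqn:ldplimit}) of the stationary measure $P_{n,\beta}$ around $z_\beta$. First I would couple an arbitrary chain $X^t$, started from a worst-case configuration $x$, with a chain $Y^t$ whose initial law $Y^0 \sim P_{n,\beta}$ is stationary, using the greedy coupling of Section \ref{glauber}; then $Y^t \sim P_{n,\beta}$ for every $t$, and by the coupling inequality (\ref{coupling_ineq}) it suffices to show that $\mathbb{E}[d(X^t, Y^t)]$ drops below $\epsilon$ after $O(n \log n)$ steps.

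The central estimate is a one-step recursion for $a_t := \mathbb{E}[d(X^t, Y^t)]$. Let $A_t := \{\|L_n(Y^t) - z_\beta\|_1 < \varepsilon'\}$ be the event that the stationary chain is near equilibrium, with $\varepsilon'$ as in Lemma \ref{lemma:contract}. On $A_t$, conditioning on $\mathcal{F}_t$ and applying Lemma \ref{lemma:contract} (with $Y^t$ playing the role of the near-equilibrium configuration $\sigma$ and $X^t$ that of $\tau$) gives $\mathbb{E}[d(X^{t+1}, Y^{t+1}) \mid \mathcal{F}_t] \leq e^{-\alpha/n} d(X^t, Y^t)$. On the complementary event $A_t^c$ I would use only the trivial bound $d(X^{t+1}, Y^{t+1}) \leq n$. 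Splitting the expectation over $A_t$ and $A_t^c$, and using that $Y^t \sim P_{n,\beta}$ by stationarity, yields
\[
a_{t+1} \leq e^{-\alpha/n}\, a_t + n \, P_{n,\beta}\big(\|L_n - z_\beta\|_1 \geq \varepsilon'\big).
\]
Because $z_\beta$ is the unique minimizer of the rate function $I_\beta$ (it is the single equilibrium macrostate, so $I_\beta(z_\beta)=0$ while $I_\beta>0$ on the closed set $\{\|z-z_\beta\|_1\geq\varepsilon'\}$), the LDP upper bound (\ref{eqn:ldpbound}) supplies a constant $c>0$ with $P_{n,\beta}(\|L_n - z_\beta\|_1 \geq \varepsilon') \leq e^{-cn}$ for all large $n$.

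Then I would solve the linear recursion. With $\lambda = e^{-\alpha/n}$ and $b = n e^{-cn}$, iterating gives $a_t \leq \lambda^t a_0 + b/(1-\lambda)$; using $a_0 \leq n$ together with $1 - \lambda \geq \alpha/(2n)$ for large $n$ yields $a_t \leq n\, e^{-\alpha t/n} + 2 n^2 e^{-cn}/\alpha$. The second term is exponentially small in $n$ and hence negligible, while the first falls below $\epsilon/2$ as soon as $t \geq (n/\alpha)\log(2n/\epsilon)$. Choosing $t = O(n \log n)$ therefore forces $a_t \leq \epsilon$, and the coupling inequality (\ref{coupling_ineq}) gives $d(t) \leq \epsilon$, i.e. $t_{mix}(\epsilon) = O(n \log n)$.

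The main obstacle is controlling the contribution of the bad event $A_t^c$, where no contraction is available: the crude bound $d \leq n$ is multiplied by the $\approx 2n/\alpha$ factor coming from $1/(1-\lambda)$, producing a polynomial error of order $n^2$. It is precisely the \emph{exponential} decay $e^{-cn}$ furnished by the large deviation principle — the measure concentration of the Gibbs ensemble around its unique equilibrium macrostate — that defeats this polynomial and keeps the error term negligible uniformly in $t$. This is the step where the large deviations input is indispensable, and it is what makes the argument succeed even for models with discontinuous phase transitions, where neighbor-to-neighbor contraction fails.
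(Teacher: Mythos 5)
Your proposal is correct and follows essentially the same route as the paper: couple a worst-case chain with a stationary one, apply Lemma \ref{lemma:contract} on the event that the stationary copy is near $z_\beta$, use the trivial bound $d\leq n$ off that event, and beat the accumulated error with the exponential LDP estimate. The only cosmetic difference is that you close the recursion with a geometric-series bound $b/(1-\lambda)\approx 2n^2e^{-cn}/\alpha$, whereas the paper iterates directly and accumulates the error as $nt\,P_{n,\beta}\{\|L_n-z_\beta\|_1\geq\varepsilon'\}$; both are polynomial factors annihilated by the same large-deviation decay.
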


\begin{proof}  
Let $\ve'>0$ and $\alpha>0$ be as in Lemma \ref{lemma:contract}.
Let $(X^t, Y^t)$ be a coupling of the Glauber dynamics such that $X^0 \overset{dist}{=} P_{n, \beta}$, the stationary distribution.  
Then, for sufficiently large $n$,
\beas
\lefteqn{ \| P^t(Y^0, \cdot) - P_{n, \beta} \|_{\mbox{{\small TV}}} } \\
& \hsp \hsp \leq & P \{ X^t \neq Y^t \} \\
& \hsp \hsp = & P \{ d(X^t, Y^t ) \geq 1 \} \\
& \hsp \hsp \leq & \mathbb{E} [ d(X^t,Y^t)] \\
& \hsp \hsp = & \mathbb{E} [ {\scriptstyle \mathbb{E}[d(X^t,Y^t)~|X^{t-1},Y^{t-1}]} ]\\
& \hsp \hsp \leq & \mathbb{E} [ {\scriptstyle \mathbb{E}[d(X^t,Y^t)~|X^{t-1},Y^{t-1}]}~|~\|L_n(X^{t-1}) - z_\beta\|_1 < \varepsilon' ] \cdot P\{\|L_n(X^{t-1}) - z_\beta\|_1<\varepsilon' \} \\
& & + n P\{\|L_n(X^{t-1}) - z_\beta\|_1 \geq \varepsilon' \} .
\eeas

\noindent
By Lemma \ref{lemma:contract}, we have
\bea
\label{eqn:meandistcontract}
\lefteqn{ \mathbb{E} [ {\scriptstyle \mathbb{E}[d(X^t,Y^t)~|X^{t-1},Y^{t-1}]}~|~\|L_n(X^{t-1}) - z_\beta\|_1 < \varepsilon' ] } \nonumber \\
& \hsp \hsp \hsp & \leq e^{-\alpha /n} \mathbb{E} [ d(X^{t-1},Y^{t-1})~|~\|L_n(X^{t-1}) - z_\beta\|_1 < \varepsilon' ] 
\eea

\noi
By iterating (\ref{eqn:meandistcontract}), it follows that 
\beas
\lefteqn{ \| P^t(Y^0, \cdot) - P_{n, \beta, K} \|_{\mbox{{\small TV}}} } \\
& \hsp \hsp  \leq & e^{-\alpha /n} \mathbb{E} [ d(X^{t-1},Y^{t-1})~|~\|L_n(X^{t-1}) - z_\beta\|_1 < \varepsilon' ] \cdot P\{\|L_n(X_{t-1}) - z_\beta\|_1<\varepsilon' \} \\
& & + n P\{\|L_n(X^{t-1}) - z_\beta\|_1 \geq \varepsilon' \} \\
& \hsp \hsp  \leq & e^{-\alpha /n} \mathbb{E} [ d(X^{t-1},Y^{t-1})]+nP\{\|L_n(X^{t-1}) - z_\beta\|_1 \geq \varepsilon' \} \\
& \hsp \hsp \vdots & \vdots\\
& \hsp \hsp \leq & e^{-\alpha t/n} \mathbb{E} [ d(X^0,Y^0)] +n \sum_{s=0}^{t-1} P\{\|L_n(X^s) - z_\beta\|_1 \geq \varepsilon' \} \\
& \hsp \hsp = & e^{-\alpha t/n} \mathbb{E} [ d(X^0,Y^0)]+n t P_{n, \beta}\{\|L_n(X^0) - z_\beta\|_1 \geq \ve'\} \\
& \hsp \hsp \leq & n e^{-\alpha t/n}+ntP_{n, \beta}\{\|L_n(X^0) - z_\beta\|_1 \geq \varepsilon' \} .
\eeas

\noi
We recall the LDP limit (\ref{eqn:ldplimit}) for $\beta$ in the single phase region $B$, 
\[ P_{n, \beta}\{ L_n(X^0) \in dx \} \Longrightarrow \delta_{z_\beta} \hsp \mbox{as $n \goto \infty$}. \]
Moreover, for any $\gamma'>1$ and $n$ sufficiently large, by the LDP upper bound (\ref{eqn:ldpbound}), we have
\beas
\|P^t(Y^0, \cdot) - P_{n, \beta} \|_{\mbox{{\small TV}}} & \leq & n e^{-\alpha t/n}+n t P_{n, \beta}\{\|L_n(X^0) - z_\beta\|_1 \geq \varepsilon' \}  \\
&  <  & n e^{-\alpha t/n} +t n e^{-{n \over \gamma'}I_{\beta }(\varepsilon')} .
\eeas
For $t = {n \over \alpha} (\log n + \log (2/\ve'))$, the above right-hand side converges to $\ve'/2$ as $n \goto \infty$. 
\ep

\bigskip

\section{Aggregate Path Coupling applied to the Generalized Potts Model} \label{GCWP} 

\medskip

In this section, we illustrate the strength of our main result of section \ref{sec:main}, Theorem \ref{thm:main}, by applying it to the generalized Curie-Weiss-Potts model (GCWP), studied recently in \cite{JKRW}.  The classical Curie-Weiss-Potts (CWP) model, which is the mean-field version of the well known Potts model of statistical mechanics \cite{Wu} is a particular case of the GCWP model with $r=2$.  While the mixing times for the CWP model has been studied in \cite{CDLLPS}, these are the first results for the mixing times of the GCWP model. Moreover, the application of our methods gives a significantly shorter derivation for the region of rapid mixing than the one used for the CWP model in \cite{CDLLPS}, where the result in \cite{CDLLPS} is part of a complete analysis of the CWP model that includes cut-off.

\medskip

Let $q$ be a fixed integer and define $\Lambda = \{ e^1, e^2, \ldots, e^q \}$, where $e^k$ are the $q$ standard basis vectors of $\R^q$.  A  {\it configuration} of the model has the form $\omega = (\omega_1, \omega_2, \ldots, \omega_n) \in \Lambda^n$.  We will consider a configuration on a graph with $n$ vertices and let $X_i(\omega) = \omega_i$ be the {\it spin} at vertex $i$.  The random variables $X_i$'s for $i=1, 2, \ldots, n$ are independent and identically distributed with common distribution $\rho$.  

For the generalized Curie-Weiss-Potts model, for $r \geq 2$, the interaction representation function as in Definition \ref{defn:irf}, has the form
\[ H^r(z) = - \frac{1}{r} \sum_{j=1}^q z_j^r \]
and the generalized Curie-Weiss-Potts model is defined as the Gibbs measure
\be 
\label{eqn:GCWPgibbs}
P_{n, \beta, r} (B) = \frac{1}{Z_n(\beta)} \int_B \exp \left\{  -\beta n \, H^r\left(L_n(\omega) \right) \right\} dP_n 
\ee
where $L_n(\omega)$ is the empirical measure defined in (\ref{eqn:empmeasure}).

\medskip

In \cite{JKRW}, the authors proved that there exists a phase transition critical value $\beta_c(q, r)$ such that in the parameter regime $(q, r) \in \{2\} \times [2,4]$, the GCWP model undergoes a continuous, second-order, phase transition and for $(q,r)$ in the complementary regime, the GCWP model undergoes a discontinuous, first-order, phase transition.  This is stated in the following theorem.

\begin{thm}[Generalized Ellis-Wang Theorem]
\label{thm:EW}
Assume that $q \geq 2$ and $r \geq 2$.  Then there exists a critical temperature $\beta_c(q,r) > 0$ such that in the weak limit
\[ \lim_{n \goto \infty} P_{n, \beta, r} (L_n \in \cdot) = \left\{ \begin{array}{ll} \delta_{1/q(1, \ldots, 1)} & \mbox{if} \ \beta < \beta_c(q,r) \vspace{.1in} \\   \frac{1}{q} \sum_{i=1}^q \delta_{u(\beta, q, r) e^i + (1-u(\beta, q, r))/q(1, \ldots, 1)} & \mbox{if} \ \beta > \beta_c(q,r) \end{array} \right. \]
where $u(\beta, q, r)$ is the largest solution to the so-called mean-field equation 
\[ u = \frac{1 - \exp( \Delta(u))}{1+(q-1) \exp (\Delta (u))} \]
with $\Delta (u) :=-{\beta \over q^{r-1}}\big[(1+(q-1)u)^{r-1}-(1-u)^{r-1} \big]$.  Moreover, for $(q, r) \in \{2\} \times [2,4]$, the function $\beta \mapsto u(\beta, q, r)$ is continuous whereas, in the complementary case, the function is discontinuous at $\beta_c(q,r)$.
\end{thm}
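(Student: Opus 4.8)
The plan is to reduce the weak-limit statement to a finite-dimensional variational problem via the large deviation principle, and then to analyze the free-energy landscape on the simplex $\mathcal{P}$. By the LDP for $L_n$ under $P_{n,\beta,r}$ stated in Section \ref{ldp}, the law of $L_n$ concentrates exponentially fast on the set of equilibrium macrostates $\mathcal{E}_\beta$ defined in (\ref{eqn:eqmac}), i.e.\ on the global minimizers over $\mathcal{P}$ of
\[ F_\beta(\nu) = R(\nu\,|\,\rho) + \beta H^r(\nu) = \sum_{k=1}^q \nu_k \log(q\nu_k) - \frac{\beta}{r}\sum_{k=1}^q \nu_k^r \]
(here $\rho$ is uniform). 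The first step is to show the weak limit is the normalized sum of point masses at these minimizers. Since $H^r$ and $\rho$ are invariant under the $S_q$-action permuting the coordinates of $\R^q$, the law of $L_n$ is permutation-invariant; hence if $\mathcal{E}_\beta$ is a single $S_q$-orbit of size $q$ the limit must carry equal mass $1/q$ on that orbit, while if $\mathcal{E}_\beta=\{z_0\}$ with $z_0=\frac1q(1,\dots,1)$ the limit is $\delta_{z_0}$. This matches the two cases of the statement once the minimizers are identified.

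The second step is to locate the minimizers. At an interior critical point the Lagrange condition reads $\log(q\nu_k)-\beta\nu_k^{r-1}=\text{const}$ for every $k$, so each coordinate solves the same scalar equation $h(x):=\log x-\beta x^{r-1}=\text{const}$. For $r\geq 2$ the map $h$ is strictly concave on $(0,\infty)$, so this equation has at most two roots; hence every critical point takes at most two distinct coordinate values. A convexity/exchange argument along the resulting two-value family then forces a nonsymmetric minimizer to have exactly one distinguished coordinate, i.e.\ to lie on
\[ z^i(u) = u\,e^i + \frac{1-u}{q}(1,\dots,1), \qquad u \in [0,1]. \]
Eliminating the Lagrange multiplier between the two values $a=(1+(q-1)u)/q$ and $b=(1-u)/q$ gives $\log(a/b)=\beta(a^{r-1}-b^{r-1})=-\Delta(u)$, which rearranges exactly to the mean-field equation $u=(1-\exp\Delta(u))/(1+(q-1)\exp\Delta(u))$.

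Having reduced to one parameter, the third step is to study $\phi_\beta(u):=F_\beta(z^i(u))$ on $[0,1]$, whose stationary points are precisely the mean-field solutions. One checks that $u=0$ is always stationary, that $\phi_\beta(0)$ is the unique global minimum for all small $\beta$ (so $\mathcal{E}_\beta=\{z_0\}$), and that as $\beta$ grows a positive solution eventually becomes the global minimizer. I would define $\beta_c(q,r)$ as the threshold at which $\min_{u>0}\phi_\beta(u)$ first equals $\phi_\beta(0)$, and select $u(\beta,q,r)$ as the largest solution, since it is the one realizing the global minimum for $\beta>\beta_c$.

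The main obstacle is the fourth step, the continuous-versus-discontinuous dichotomy, which is governed by the local geometry of $\phi_\beta$ at $u=0$ together with the global shape of the branch $u(\beta)$. I would Taylor-expand $\phi_\beta$ about $u=0$: the quadratic coefficient vanishes at a value $\beta_0(q,r)$ where $z_0$ switches from a local minimum to a local maximum. When $q=2$ the extra reflection symmetry $u\leftrightarrow-u$ (coming from the $S_2$-invariance, under which $z^1(-u)=z^2(u)$) kills all odd terms, so the transition is a pitchfork whose type is fixed by the sign of the quartic coefficient; the computation shows this coefficient is nonnegative exactly for $r\in[2,4]$, giving a supercritical (continuous) bifurcation with $\beta_c=\beta_0$ and $u(\beta)\to 0$ as $\beta\downarrow\beta_c$. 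For $q\geq 3$ a nonvanishing cubic term, and for $q=2,\ r>4$ a negative quartic term, each instead produce a competing local minimum at $u$ bounded away from $0$ that reaches $\phi_\beta(0)$ while $z_0$ is still locally stable ($\beta_c<\beta_0$), so $u(\beta)$ jumps, giving the discontinuous transition. Turning these local sign computations into a global conclusion — that the selected branch is the global minimizer on all of $[0,1]$ and that no other two-value configuration undercuts it — is the delicate part, and would require monotonicity estimates on $\phi_\beta'$, equivalently a careful root count for the mean-field equation, uniform over the relevant range of $\beta$.
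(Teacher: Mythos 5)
First, a point of comparison: the paper does not prove Theorem \ref{thm:EW} at all --- it is imported from \cite{JKRW} (for $r=2$ it is the classical Ellis--Wang theorem for the Curie--Weiss--Potts model) and is used only as an input to Lemma \ref{lemma:CritIneq} and the discussion of $\beta_s$ versus $\beta_c$. So there is no internal proof to match your argument against; what you have written is a sketch of the external result itself.

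As a sketch it follows the standard Ellis--Wang route, and the pieces you actually carry out are correct: the LDP reduction to minimizing $R(\nu|\rho)+\beta H^r(\nu)$ over $\mathcal{P}$, the $S_q$-symmetry argument pinning the weights $1/q$ on the minimizing orbit, the strict concavity of $h(x)=\log x-\beta x^{r-1}$ for $r\ge 2$ forcing at most two distinct coordinate values at any critical point, and the algebra recovering the mean-field equation from the Lagrange condition (indeed $a-b=u$ and $a+(q-1)b=1$, so $\log(a/b)=-\Delta(u)$ rearranges exactly to the stated fixed-point equation). But two steps are asserted rather than proved, and they are precisely where the real work lies. (i) The ``convexity/exchange argument'' reducing a two-valued critical point with $m$ coordinates at the larger value to $m=1$: concavity of $h$ only bounds the number of distinct values, and ruling out global minimizers with $2\le m\le q-1$ large coordinates is a genuine lemma, not a one-line exchange. (ii) The global analysis in your final step: the Landau-type expansion at $u=0$ classifies only the local bifurcation, and you still must show that the largest root of the mean-field equation is the global minimizer of $\phi_\beta$ on $[0,1]$, that $\beta\mapsto\min_{u>0}\phi_\beta(u)-\phi_\beta(0)$ has a single sign change (so that $\beta_c$ is well defined), and that for $q=2$, $r\in[2,4]$ no minimum bounded away from $0$ undercuts the symmetric phase before the quadratic coefficient vanishes. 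You flag (ii) yourself, but it is not a detail: it is the heart of the continuous-versus-discontinuous dichotomy, and the exact boundary $r=4$ emerges from the quartic coefficient only after these global root-counting estimates are in place. Within this paper the economical move is simply to cite \cite{JKRW}, as the authors do.
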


\medskip

For the GCWP model, the function $g_{\ell}^{H, \beta}(z)$ defined in general in (\ref{eqn:gell}) has the form
\[ 
g_{k}^{H, \beta}(z) = \left[ \partial_{k} \Gamma \right] (\beta \nabla H(z)) = \left[ \partial_{k} \Gamma \right] (\beta z) = {e^{\beta z_k^{r-1}} \over e^{\beta z_1^{r-1}}+ \ldots +e^{\beta z_q^{r-1}}}.
\]
For the remainder of this section, we will replace the notation $H, \beta$ and refer to $g^{H, \beta}(z)=\big(g_1^{H, \beta}(z),\hdots,g_q^{H, \beta}(z)\big)$ as simply $g^r(z)=\big(g_1^r(z),\hdots,g_q^r(z)\big)$.  As we will prove next, the rapid mixing region for the GCWP model is defined by the following value.

\bea
\label{eqn:mixcrit} 
\beta_s(q, r) := \sup \left\{ \beta \geq 0 : g_k^r(z) < z_k  \ \mbox{for all $z \in \mathcal{P}$ such that}  \ z_k \in (1/q, 1] \right\}. 
\eea

\medskip

\begin{lemma} 
\label{lemma:CritIneq}
If $\beta_c(q,r)$ is the critical value derived in \cite{JKRW} and defined in Theorem \ref{thm:EW}, then
$$\beta_s(q,r) \leq \beta_c(q,r)$$ 
\end{lemma}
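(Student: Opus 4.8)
The plan is to show that the critical value $\beta_c(q,r)$ from the equilibrium analysis in \cite{JKRW} lies \emph{above} the mixing threshold $\beta_s(q,r)$ by connecting the two quantities through the single common object that governs both: the fixed points of the map $g^r$. First I would recall the defining property of $\beta_s(q,r)$ in \eqref{eqn:mixcrit}: it is the supremum of $\beta$ for which the strict inequality $g_k^r(z) < z_k$ holds for every $z \in \mathcal{P}$ with $z_k \in (1/q,1]$. The natural strategy is to relate this coordinatewise inequality to the location of nontrivial fixed points of $g^r$ along the symmetric axis of the simplex, since it is exactly the emergence of such fixed points that signals the equilibrium phase transition at $\beta_c(q,r)$.

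The key steps, in order, are as follows. First, I would restrict attention to the one-parameter family of ``$e^1$-biased'' points $z(u) = u\,e^1 + \tfrac{1-u}{q}(1,\ldots,1)$ for $u \in [0,1]$, which is the natural curve along which the broken-symmetry equilibria of Theorem~\ref{thm:EW} appear; here $z_1(u) = u + \tfrac{1-u}{q} = \tfrac{1+(q-1)u}{q} \in [1/q,1]$. Second, I would compute $g_1^r(z(u))$ explicitly using the formula $g_k^r(z) = e^{\beta z_k^{r-1}} / \sum_j e^{\beta z_j^{r-1}}$ and observe that the equation $g_1^r(z(u)) = z_1(u)$ is, after simplification, precisely the mean-field equation $u = \bigl(1 - e^{\Delta(u)}\bigr)/\bigl(1+(q-1)e^{\Delta(u)}\bigr)$ with $\Delta(u) = -\tfrac{\beta}{q^{r-1}}\bigl[(1+(q-1)u)^{r-1} - (1-u)^{r-1}\bigr]$ appearing in Theorem~\ref{thm:EW}. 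Third, by the Generalized Ellis--Wang Theorem, for $\beta > \beta_c(q,r)$ this mean-field equation has a positive solution $u(\beta,q,r) > 0$, which produces a point $z^\ast = z(u^\ast)$ with $z_1^\ast \in (1/q,1]$ and $g_1^r(z^\ast) = z_1^\ast$, i.e.\ an equality that \emph{violates} the strict inequality $g_1^r(z) < z_1$ in the definition of $\beta_s$. Hence any $\beta > \beta_c(q,r)$ fails the defining condition of $\beta_s(q,r)$, which forces $\beta_s(q,r) \le \beta_c(q,r)$ by the supremum characterization.

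The main obstacle I anticipate is the algebraic identification in the second step: verifying that the fixed-point equation $g_1^r(z(u)) = z_1(u)$ collapses exactly to the stated mean-field equation with the correct $\Delta(u)$. This requires carefully evaluating $z_k(u)^{r-1}$ for the biased coordinate ($z_1 = \tfrac{1+(q-1)u}{q}$) versus the $q-1$ equal coordinates ($z_k = \tfrac{1-u}{q}$), factoring the common term out of the exponential ratio, and matching the result to the prescribed form of $\Delta$; the factor $q^{-(r-1)}$ in $\Delta(u)$ arises precisely from these $q$ denominators. A secondary subtlety is boundary bookkeeping: I must ensure the fixed point supplied by Theorem~\ref{thm:EW} genuinely has $z_1^\ast \in (1/q,1]$ rather than collapsing to the symmetric point $u=0$ (where $z_1 = 1/q$ and $g_1^r = 1/q$ trivially), which is guaranteed by taking $u^\ast = u(\beta,q,r) > 0$ for $\beta > \beta_c(q,r)$. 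Once these points are settled, the inequality $\beta_s(q,r) \le \beta_c(q,r)$ follows directly from the definitions.
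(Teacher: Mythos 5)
Your proposal is correct and takes essentially the same route as the paper: the paper's test point $z=(1-\lambda,\tfrac{\lambda}{q-1},\dots,\tfrac{\lambda}{q-1})$ with $\lambda=(1-u)\tfrac{q-1}{q}$ is exactly your biased point $z(u)$, and both arguments reduce the failure of $g_1^r(z)<z_1$ for $\beta>\beta_c(q,r)$ to the mean-field equation of Theorem~\ref{thm:EW}. The only cosmetic differences are that the paper argues by contradiction and uses a strict inequality $u<(1-e^{\Delta(u)})/(1+(q-1)e^{\Delta(u)})$ for some $u>0$, whereas you use the fixed-point equality at the largest solution $u(\beta,q,r)>0$; either one violates the strict inequality in the definition (\ref{eqn:mixcrit}) and yields the bound.
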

\begin{proof}
We will prove this lemma by contradiction. Suppose $\beta_c(q,r) < \beta_s(q,r)$. Then there exists $\beta$ such that 
$$\beta_c(q,r) < \beta < \beta_s(q,r).$$
Then, by Theorem \ref{thm:EW}, since $\beta_c(q,r) <\beta$, there exists $u>0$ satisfying the following inequality
\begin{equation}\label{ineq:du}
u < {1-e^{\Delta(u)} \over 1+(q-1)e^{\Delta(u)}},
\end{equation}
where $~\Delta(u):=-{\beta \over q^{r-1}}\big[(1+(q-1)u)^{r-1}-(1-u)^{r-1} \big]$. Here, the above inequality (\ref{ineq:du}) rewrites as 
\begin{equation}\label{ineq:du2}
e^{\Delta(u)}=\exp\left\{\beta\left[\left({1-u \over q} \right)^{r-1}-\left({1+(q-1)u \over q} \right)^{r-1}\right]\right\} ~<~ {1-u \over (q-1)u+1}.
\end{equation}
Next, we substitute $\lambda=(1-u){q-1 \over q}$ into the above inequality (\ref{ineq:du2}), obtaining
\begin{equation}\label{ineq:du3}
\exp\left\{\beta\left[\left({\lambda \over q-1} \right)^{r-1}-\left(1-\lambda \right)^{r-1}\right]\right\} ~<~ {\lambda \over (1-\lambda)(q-1)}.
\end{equation}
Now, consider 
$$z=\left(1-\lambda, {\lambda \over q-1},\hdots, {\lambda \over q-1} \right).$$
Observe that $~z_1=1-\lambda=1-(1-u){q-1 \over q}={1+u(q-1) \over q}>{1 \over q}~$ as $u>0$.
Here, the inequality (\ref{ineq:du3}) can be consequently rewritten in terms of the above selected $z$ as follows
$$z_1=1-\lambda ~<~{e^{\beta(1-\lambda)^{r-1}} \over e^{\beta(1-\lambda)^{r-1}}+(q-1)e^{\beta\big({\lambda \over q-1}\big)^{r-1}}}=g_1^r(z),$$
thus contradicting $\beta < \beta_s(q,r)$. Hence $~\beta_s(q,r) \leq \beta_c(q,r)$.
\end{proof}

Combining Theorem \ref{thm:EW} and Lemma \ref{lemma:CritIneq} yields that for parameter values $(q, r)$ in the continuous, second-order phase transition region $\beta_s(q,r) = \beta_c(q,r)$, whereas in the discontinuous, first-order, phase transition region, $\beta_s(q,r)$ is strictly less than $\beta_c(q,r)$. This relationship between the equilibrium transition critical value and the mixing time transition critical value was also proved for the mean-field Blume-Capel model discussed in \cite{KOT}.  This appears to be a general distinguishing feature between models that exhibit the two distinct types of phase transition.  We now prove rapid mixing for the generalized Curie-Weiss-Potts model for $\beta < \beta_s(q,r)$ using the aggregate path coupling method derived in section \ref{sec:main}.

\medskip
We state the lemmas that we prove below, and the main result for the Glauber dynamics of the generalized Curie-Weiss-Potts model, a Corollary  to Theorem \ref{thm:main}. 

\begin{lemma}\label{cwp1}
Condition \ref{uniform} and Condition \ref{RS} are satisfied for all $\beta < \beta_s(q,r)$.
\end{lemma}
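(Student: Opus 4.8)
The plan is to establish Condition \ref{uniform} first and then leverage the remark following Condition \ref{RS} to upgrade it to Condition \ref{RS}. The key structural fact for the GCWP model is that the equilibrium macrostate in the single phase region $\beta < \beta_c(q,r)$ is the uniform vector $z_\beta = (1/q, \ldots, 1/q)$, and that $g^r(z)$ has the explicit Gibbs-type form displayed above the statement. The defining property of $\beta_s(q,r)$ in \eqref{eqn:mixcrit} says precisely that for $\beta < \beta_s(q,r)$ we have $g_k^r(z) < z_k$ whenever $z_k > 1/q$. My intention is to convert this pointwise inequality on the \emph{values} of $g^r$ into the required bound on the aggregate $g$-variation $d_g(z, z_\beta)$, which is a statement about the \emph{gradient} of $g^r$ integrated along paths.

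First I would fix a point $z \neq z_\beta$ in $\mathcal{P}$ and a continuous monotone (in each coordinate) path $\rho$ from $z_\beta$ to $z$, and examine
\[
D_\rho^g(z, z_\beta) = \sum_{k=1}^q \int_\rho \Bigl| \bigl\langle \nabla g_k^r(y), dy \bigr\rangle \Bigr|.
\]
Along a coordinate-monotone path, each coordinate $y_k$ varies monotonically, so the contribution of the $k$th term telescopes: if $g_k^r$ were itself monotone along $\rho$ the integral would collapse to $|g_k^r(z) - g_k^r(z_\beta)| = |g_k^r(z) - 1/q|$. The heart of the argument is therefore to control the total variation of each $g_k^r$ along $\rho$ by a quantity comparable to $\|z - z_\beta\|_1$, and to exploit the strict inequality $g_k^r(z) < z_k$ to gain the uniform gap $\delta$. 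Concretely, I would use the symmetry of the model under permutations of coordinates to reduce to understanding the radial behavior of $g^r$ emanating from the symmetric point $z_\beta$, and show that $\sum_k |g_k^r(z) - 1/q| \leq (1-\delta)\,\|z - z_\beta\|_1$ holds with a $\delta$ bounded away from $0$ uniformly on the compact simplex $\mathcal{P}$, using $\beta < \beta_s(q,r)$ together with the second part of Theorem \ref{thm:EW} (which guarantees no nontrivial solution to the mean-field equation below $\beta_c$, hence no competing structure). By the remark after Condition \ref{uniform}, since $d_g(z,z_\beta) < \|z-z_\beta\|_1$ can be verified for all $z$, continuity and compactness promote this to the uniform $\limsup < 1$ bound, giving Condition \ref{uniform} with an explicit $\delta \in (0,1)$.

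To pass from Condition \ref{uniform} to Condition \ref{RS}, I would invoke the criterion flagged in the paragraph following Condition \ref{RS}: it suffices to exhibit a neo-geodesic family ${\bf NG}_\delta$ whose curves have uniformly bounded Cauchy curvature. For the GCWP model the natural choice is the family of straight-line segments (or their monotone reparametrizations) from $z_\beta$ to each $z$, which are manifestly smooth, monotone in each coordinate by convexity of $\mathcal{P}$, and have curvature zero. On such curves the Riemann sum
\[
\sum_{k=1}^q \sum_{i=1}^r \Bigl| \bigl\langle z_i - z_{i-1}, \nabla g_k^r(z_{i-1}) \bigr\rangle \Bigr|
\]
approximates $D_\rho^g(z, z_\beta)$ with an error of order $\varepsilon$ coming from the $\mathcal{C}^3$ smoothness of $H^r$ (hence $\mathcal{C}^2$ smoothness of $g^r$), so choosing $\varepsilon$ small enough degrades the bound $1 - \delta/2$ only to $1 - \delta/3$, which is exactly Condition \ref{RS}.

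The main obstacle I anticipate is the conversion step in the second paragraph: the definition of $\beta_s(q,r)$ controls the sign of $g_k^r(z) - z_k$ pointwise, but Condition \ref{uniform} demands control of the integrated gradient variation of $g^r$, and these coincide only when $g_k^r$ is monotone along the chosen path. Ensuring that a coordinate-monotone path from $z_\beta$ can be chosen along which each $g_k^r$ is also monotone—or otherwise bounding the extra total-variation contributions when $g_k^r$ is not monotone—is the delicate point, and I expect it to rely essentially on the permutation symmetry of the model and on the absence of interior critical points of $G_\beta$ below $\beta_c$ guaranteed by Theorem \ref{thm:EW}.
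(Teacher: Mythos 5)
Your overall skeleton matches the paper's: straight-line segments from $z_\beta=(1/q,\ldots,1/q)$ as the neo-geodesic family ${\bf NG}_\delta$, and a Riemann-sum approximation (with error of order $\ve$ coming from the $\mathcal{C}^2$ smoothness of $g^r$) to pass from Condition \ref{uniform} to Condition \ref{RS}; that last step is essentially identical to what the paper does. But the central step --- bounding the aggregate $g$-variation $D_\rho^g(z,z_\beta)$, i.e.\ the \emph{total variation} of each $g_k^r$ along the segment rather than the endpoint difference $|g_k^r(z)-1/q|$ --- is exactly the point you flag as ``delicate'' and leave unresolved, and it is the entire content of the lemma. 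Your proposed intermediate inequality $\sum_k|g_k^r(z)-1/q|\le(1-\delta)\|z-z_\beta\|_1$ would not suffice, since the variation of a non-monotone $g_k^r$ along the path strictly exceeds $|g_k^r(z)-1/q|$, and neither permutation symmetry nor the second part of Theorem \ref{thm:EW} (which the paper uses only in Lemma \ref{lemma:CritIneq}, not here) supplies the missing control.

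The paper's resolution is an explicit monotonicity analysis along the line $z(t)=\frac{1}{q}(1-t)+zt$. Differentiating (\ref{eqn:coordg2}) gives (\ref{eqn:dg}), in which the weighted inner product $\langle z-z_\beta,g^r(z(t))\rangle_\rho$ appears; a variance computation shows this inner product is strictly increasing in $t$ and starts at $0$. Consequently, for $z_k\le 1/q$ the function $g_k^r(z(t))$ is monotone decreasing, so its variation telescopes to $1/q-g_k^r(z)$, while for $k$ in $A_z=\{k:z_k>1/q\}$ it has at most one critical point $t_k^\ast$, so its variation equals $2g_k^r(z(t_k^\ast))-g_k^r(z)-1/q$. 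Summing and using $\sum_k g_k^r\equiv 1$ collapses everything to $D_\rho^g(z,z_\beta)=2\sum_{k\in A_z}\bigl(g_k^r(z(t_k^\ast))-1/q\bigr)$, and only now does the defining property (\ref{eqn:mixcrit}) of $\beta_s(q,r)$ enter: it is applied at the \emph{interior} points $z(t_k^\ast)$ (whose $k$th coordinate exceeds $1/q$), giving $g_k^r(z(t_k^\ast))<z(t_k^\ast)_k\le z_k$ and hence $D_\rho^g(z,z_\beta)<2\sum_{k\in A_z}(z_k-1/q)=\|z-z_\beta\|_1$. Without this at-most-one-critical-point structure and the evaluation of the variation at $t_k^\ast$, the pointwise inequality $g_k^r(z)<z_k$ cannot be converted into the required bound on the integrated variation, so your proposal has a genuine gap at its core.
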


\begin{lemma}\label{cwp2}
Condition \ref{ass:localbound} is satisfied for all $\beta < \beta_s(q,r)$.
\end{lemma}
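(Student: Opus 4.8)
The plan is to reduce Condition \ref{ass:localbound} to a spectral computation of $g^r$ at its fixed point and then read off the threshold from the definition (\ref{eqn:mixcrit}) of $\beta_s(q,r)$. First I would record that, since $\beta<\beta_s(q,r)\le\beta_c(q,r)$ by Lemma \ref{lemma:CritIneq}, Theorem \ref{thm:EW} identifies the unique equilibrium macrostate as $z_\beta=\tfrac1q(1,\dots,1)$, and that this point is a fixed point of $g^r$: indeed $g_k^r(z_\beta)=e^{\beta q^{1-r}}/(q\,e^{\beta q^{1-r}})=1/q=(z_\beta)_k$. Because $H^r\in\mathcal C^3$, the map $g^r$ is $\mathcal C^2$ near $z_\beta$, so a first-order Taylor expansion gives
$$g^r(z)-g^r(z_\beta)=Dg^r(z_\beta)\,(z-z_\beta)+o(\|z-z_\beta\|_1)$$
as $z\to z_\beta$. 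Since every $z\in\mathcal P$ satisfies $\sum_k z_k=1$, the increment $z-z_\beta$ always lies in the tangent hyperplane $T=\{v\in\R^q:\sum_k v_k=0\}$, so the ratio in Condition \ref{ass:localbound} is governed entirely by the action of $Dg^r(z_\beta)$ on $T$, which I compute next.

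To compute the Jacobian I would write $f_k(z)=\beta z_k^{r-1}$, so that $g_k^r=e^{f_k}/\sum_j e^{f_j}$ is a softmax and the standard derivative identity gives $\partial g_k^r/\partial z_m=g_k^r(\delta_{km}-g_m^r)\,f_m'(z_m)$. Evaluating at $z_\beta$, where $g_k^r=1/q$ and $f_m'(1/q)=\beta(r-1)q^{2-r}=:a$, yields
$$Dg^r(z_\beta)=\frac aq\,I-\frac a{q^2}\,J,$$
with $J$ the all-ones matrix. On $T$ one has $Jv=0$, so $Dg^r(z_\beta)$ acts on $T$ as the scalar $a/q=\beta(r-1)/q^{r-1}$; hence $\|Dg^r(z_\beta)v\|_1=(a/q)\|v\|_1$ for every $v\in T$, and the limsup in Condition \ref{ass:localbound} equals (in fact is a genuine limit) $\beta(r-1)/q^{r-1}$.

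It then remains to show $\beta(r-1)/q^{r-1}<1$ whenever $\beta<\beta_s(q,r)$, i.e.\ $\beta_s(q,r)\le q^{r-1}/(r-1)$. I would extract this from the defining inequality (\ref{eqn:mixcrit}) by a radial perturbation of $z_\beta$. Fix a coordinate $k$ and set $z(t)=z_\beta+tv$ with $v_k=1$ and $v_j=-1/(q-1)$ for $j\ne k$; then $v\in T$, and for small $t>0$ one has $z(t)\in\mathcal P$ with $z_k(t)=1/q+t>1/q$. For $\beta<\beta_s(q,r)$ the definition forces $g_k^r(z(t))<z_k(t)$, while the expansion above gives $g_k^r(z(t))-z_k(t)=(a/q-1)\,t+o(t)$. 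If $a/q>1$ the leading term is positive for small $t>0$, contradicting $g_k^r(z(t))<z_k(t)$; hence $a/q\le1$ for every $\beta<\beta_s(q,r)$, which gives $\beta_s(q,r)\le q^{r-1}/(r-1)$. Therefore $\beta<\beta_s(q,r)$ forces the strict inequality $\beta<q^{r-1}/(r-1)$, so $\beta(r-1)/q^{r-1}<1$ and Condition \ref{ass:localbound} holds.

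The only genuinely delicate point is the bookkeeping in the first step: one must use that admissible perturbations remain in the tangent hyperplane $T$, so that the rank-one all-ones part of the Jacobian drops out and the operator norm collapses to the single scalar $a/q$. Everything else is a routine softmax differentiation and a one-variable sign check, so I expect this restriction to $T$ to be the place where care is needed rather than a serious obstacle.
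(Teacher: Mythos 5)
Your proof is correct, and its core computation coincides with the paper's: both linearize $g^r$ at $z_\beta=\tfrac1q(1,\dots,1)$ and identify the ratio in Condition \ref{ass:localbound} as converging to $\beta(r-1)/q^{r-1}$ (the paper does this by a direct Taylor expansion of the componentwise ratio in display (\ref{zero}), you via the softmax Jacobian restricted to the tangent hyperplane $T$ --- the same calculation in different clothing). Where you genuinely diverge is the final step. The paper closes by invoking two external facts: $\beta_s(q,r)\le\beta_c(q,r)$ from Lemma \ref{lemma:CritIneq} and the bound $\beta_c(q,r)\le q^{r-1}/(r-1)$ quoted from the proof of Lemma 5.4 of \cite{JKRW}. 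You instead prove $\beta_s(q,r)\le q^{r-1}/(r-1)$ directly from the definition (\ref{eqn:mixcrit}) by linearizing the defining inequality $g_k^r(z)<z_k$ along a radial perturbation of $z_\beta$. This is self-contained, avoids the appeal to \cite{JKRW}, and makes transparent that Condition \ref{ass:localbound} is forced by the definition of $\beta_s$ alone; the price is that you do not recover the sharper structural statement $\beta_s\le\beta_c$, which the paper gets as a by-product of its route. One small point of care: your phrase ``for $\beta<\beta_s(q,r)$ the definition forces $g_k^r(z(t))<z_k(t)$'' tacitly assumes the set in (\ref{eqn:mixcrit}) is down-closed in $\beta$; the paper makes the same tacit assumption in the proof of Lemma \ref{cwp1}, and in any case your argument survives without it (apply the perturbation to some $\beta'$ in the set with $\beta<\beta'$ to get $\beta'(r-1)/q^{r-1}\le1$, hence the strict inequality for $\beta$). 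No gap.
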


\begin{cor}
\label{cor:CWPrapid} 
If $\beta < \beta_s(q,r)$, then
\[ t_{\mbox{{\em \mix}}} = O(n \log n). \]
\end{cor}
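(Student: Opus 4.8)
The plan is to deduce Corollary~\ref{cor:CWPrapid} directly from Theorem~\ref{thm:main} together with Lemmas~\ref{cwp1} and~\ref{cwp2}. Theorem~\ref{thm:main} asserts that whenever the interaction representation function $H$ and the parameter $\beta$ satisfy Condition~\ref{RS} and Condition~\ref{ass:localbound}, the Glauber dynamics mixes in time $t_{mix} = O(n \log n)$. Thus the entire task reduces to verifying these two hypotheses for the generalized Curie-Weiss-Potts model at any fixed $\beta < \beta_s(q,r)$, and this is precisely the content of the two preceding lemmas.

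Concretely, I would first note that for the GCWP model the interaction representation function is $H^r(z) = -\frac{1}{r}\sum_{j=1}^q z_j^r$, which is finite, of class $\mathcal{C}^3(\R^q)$, concave on the relevant domain, and of the separable form $H(z)=H_1(z_1)+\cdots+H_q(z_q)$ required throughout the paper. This places the model squarely within the class covered by Theorem~\ref{thm:main}. Next I would invoke Lemma~\ref{cwp1}, which establishes Condition~\ref{uniform} and, through it, Condition~\ref{RS} for every $\beta < \beta_s(q,r)$; the route there is the one flagged in the discussion after Condition~\ref{RS}, namely establishing the uniform contraction bound $d_g(z,z_\beta)/\|z-z_\beta\|_1 \leq 1-\delta$ first and then passing to the neo-geodesic Riemann-sum statement of Condition~\ref{RS} via a curvature bound. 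I would then invoke Lemma~\ref{cwp2}, which gives Condition~\ref{ass:localbound}, the local contraction estimate $\limsup_{z\to z_\beta}\|g^{r}(z)-g^{r}(z_\beta)\|_1/\|z-z_\beta\|_1 < 1$ near the equilibrium.

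With both conditions verified for the specified $\beta$, the hypotheses of Theorem~\ref{thm:main} hold for $P_{n,\beta,r}$, and the conclusion $t_{mix} = O(n\log n)$ follows immediately. The equilibrium macrostate in the single phase region is the uniform vector $z_\beta = \frac{1}{q}(1,\ldots,1)$, as identified in Theorem~\ref{thm:EW} for $\beta < \beta_c(q,r)$; since Lemma~\ref{lemma:CritIneq} guarantees $\beta_s(q,r) \leq \beta_c(q,r)$, any $\beta < \beta_s(q,r)$ indeed lies in the single phase region and the unique-minimizer assumption underlying the whole framework is satisfied. This confirms that the large deviations input $(\ref{eqn:ldplimit})$ used in the proof of Theorem~\ref{thm:main} is available here.

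The substance of the argument is therefore entirely deferred to Lemmas~\ref{cwp1} and~\ref{cwp2}, and the hardest part is the verification of Condition~\ref{uniform} inside Lemma~\ref{cwp1}: one must show the pseudo-distance $d_g$ contracts toward $z_\beta$ uniformly over the whole simplex $\mathcal{P}$, not merely in a neighborhood of equilibrium. This is exactly where the defining inequality $g_k^r(z) < z_k$ for $z_k \in (1/q,1]$, encoded in the definition $(\ref{eqn:mixcrit})$ of $\beta_s(q,r)$, does the real work: it controls the sign and magnitude of $\nabla g_k^r$ along monotone paths so that the aggregate $g$-variation integral stays strictly below the $\ell^1$-length of the path. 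The Corollary itself, by contrast, is a one-line consequence once the lemmas are in hand, so I would keep its proof short and let the lemmas carry the analytic burden.
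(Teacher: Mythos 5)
Your proposal matches the paper's proof exactly: the corollary is obtained by citing Lemma \ref{cwp1} and Lemma \ref{cwp2} to verify Condition \ref{RS} and Condition \ref{ass:localbound}, and then applying Theorem \ref{thm:main}. The additional remarks you include (the form of $H^r$, the single-phase region via Lemma \ref{lemma:CritIneq}) are consistent with the paper and do not change the argument.
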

\bp
Condition \ref{RS} and Condition \ref{ass:localbound} required for Theorem \ref{thm:main} are satisfied by Lemma \ref{cwp1} and Lemma \ref{cwp2}.
\ep

\medskip
\noindent
\bp[Proof of Lemma \ref{cwp2}]
Denote $~z'=(z'_1,\hdots,z'_q)=z-z_\beta$. Then by Taylor's Theorem, we have
\bea \label{zero} 
\limsup_{z \rightarrow z_\beta} {\|g^r(z)-g^r(z_\beta)\|_1 \over \|z-z_\beta\|_1} & = & \limsup_{z \rightarrow z_\beta} {\sum\limits_{k=1}^q \left|{e^{\beta z_k^{r-1}} \over \sum\limits_{j=1}^q e^{\beta z_j^{r-1}}} -{1 \over q} \right| \over \sum\limits_{k=1}^q \left|z_k -{1 \over q} \right|} \nonumber\\
& = & \lim_{z' \rightarrow 0} \frac{ \sum\limits_{k=1}^q \left|{\beta (r-1) \left({1 \over q} \right)^{r-2} z'_k +O\big((z'_1)^2+\hdots+(z'_q)^2\big) \over q+O\big((z'_1)^2+\hdots+(z'_q)^2\big)  }  \right| }{ \sum\limits_{k=1}^q \left|z'_k  \right| } \nonumber\\
&= & \frac{\beta (r-1)}{q^{r-1}}.
\eea
Recall that  $\beta_s(q,r)\leq \beta_c(q,r)$ was shown in  Lemma \ref{lemma:CritIneq}, and $\beta_c(q,r) \leq {q^{r-1} \over r-1}$  was shown in the proof of Lemma 5.4 of \cite{JKRW}. 
Therefore, $~\beta < {q^{r-1} \over r-1}$ and the last expression above is less than $1$, and we conclude that 
\[ \limsup_{z \rightarrow z_\beta} {\|g^r(z)-g^r(z_\beta)\|_1 \over \|z-z_\beta\|_1} < 1. \qedhere \]
\ep

\medskip
\noindent
{\it Proof of Lemma \ref{cwp1}.} First, we prove that the family of straight lines connecting to the equilibrium point $z_\beta=\left(1/q,\hdots,1/q\right)$ is a neo-geodesic family as it was defined following Condition \ref{uniform}.  Specifically, for any $z = (z_1, z_2, \ldots, z_q) \in \mathcal{P}$ define the line path $\rho$ connecting $z$ to $z_\beta$ by 
\be
\label{eqn:linepath2} 
z(t) = {1 \over q} (1-t) + z \, t, \hsp 0 \leq t \leq 1 
\ee

\noindent
Then, along this straight-line path $\rho$, the aggregate $g$-variation has the form
\[ D_\rho^g (z,z_\beta) := \sum\limits_{k=1}^q\int\limits_\rho \Big| \Big<\nabla g_k^r (y), dy \Big> \Big| =  \sum\limits_{k=1}^q \int_0^1 \left| \frac{d}{dt} [g_k^r (z(t))] \right| \, dt \]

\medskip
\noindent
Next, for all $k = 1, 2, \ldots, q$ and $t \in [0,1]$, denote 
\[ z(t)_k = {1 \over q} (1-t) + z_k t \]
Then
\be
\label{eqn:coordg2} 
g_k^r(z(t)) = \frac{e^{\beta \big((1/q)(1-t) + z_k t \big)^{r-1}}}{\sum_{j=1}^q e^{\beta \big((1/q) (1-t) + z_j t \big)^{r-1}}}  
\ee
and 
\be
\label{eqn:dg}
\frac{d}{dt} \big[g_k^r(z(t)) \big] = \beta (r-1) g_k^r (z(t)) \Big[ \left(\frac{1}{q} (1-t) + z_k t \right)^{r-2} \left(z_k- \frac{1}{q} \right) - \langle z-z_\beta, g^r(z(t)) \rangle_\rho \Big] 
\ee
where $\langle z-z_\beta, g^r(z(t)) \rangle_\rho$ is the weighted inner product
\[ \langle z-z_\beta, g^r(z(t)) \rangle_\rho := \sum_{j=1}^q g_j^r(z(t)) \left(z_k- \frac{1}{q} \right)  \left(\frac{1}{q} (1-t) + z_k t \right)^{r-2} \]

\medskip
\noindent
Now, observe that for $z(t)$ as in (\ref{eqn:linepath2}) with $z \not= z_\beta$, the inner product $\langle (z-z_\beta), g^r(z(t)) \rangle_\rho$ is monotonically increasing in $t$ since
\[  \frac{d}{dt} \langle z-z_\beta, g^r(z(t)) \rangle_\rho \geq \beta (r-1) \, \mbox{Var}_{g^r}\left( \left(z_k-\frac{1}{q} \right) \left(\frac{1}{q} (1-t) + z_j t  \right)^{r-1} \right) > 0 \]
where $\mbox{Var}_{g^r}(\cdot)$ is the variance with respect to $g^r$.

So $\langle z-z_\beta, g^r(z(t)) \rangle_\rho$ begins at $\langle z-z_\beta, g^r(z(0)) \rangle_\rho =\langle z-z_\beta, z_\beta \rangle =0$ and increases for all $t \in (0,1)$.

\bigskip
\noindent
The above monotonicity yields the following claim about the behavior of $g_k^r(z(t))$ along the straight-line path $\rho$.

\begin{itemize}
\item[(a)] If $z_k \leq 1/q$, then $g_k^r(z(t))$ is monotonically decreasing in $t$.

\item[(b)] If $z_k > 1/q$, then $g_k^r(z(t))$ has at most one critical point $t_k^\ast$ on $(0,1)$.
\end{itemize}

\noindent
The above claim (a) follows immediately from (\ref{eqn:dg}) as $\langle z-z_\beta, g^r(z(t)) \rangle_\rho >0$ for $t>0$. Claim (b) also follows from (\ref{eqn:dg}) as its right-hand side, $z_k-1/q>0$ and $\langle z-z_\beta, g^r(z(t)) \rangle_\rho$ is increasing. Thus there is at most one point $t_k^\ast$ on $(0,1)$ such that $~\frac{d}{dt} \big[g_k^r(z(t)) \big] =0$.

\bigskip
\noi
Next, define 
\[ A_z = \{ k : z_k > 1/q \} \]
Then the aggregate $g$-variation can be split into 
\[ D_\rho^g (z,z_\beta) = \sum_{k \in A_z} \int_0^1 \left| \frac{d}{dt} [g_k^r(z(t))] \right| \, dt + \sum_{k \notin A_z} \int_0^1 \left| \frac{d}{dt} [g_k^r(z(t))] \right| \, dt \]
For $k \notin A_z$, claims (a) and (b) imply
\[ \int_0^1 \left| \frac{d}{dt} [g_k^r(z(t))] \right| \, dt = - \int_0^1 \frac{d}{dt} [g_k^r(z(t))] \, dt = g_k^r(z(0)) - g_k^r(z(1)) = \frac{1}{q} - g_k^r(z) \]
For $k \in A_z$, let $t_k = \max\{ t_k^\ast, 1\}$ ,where $t_k^\ast$ is defined in (b).  Then, we have 
\[ \int_0^1 \left| \frac{d}{dt} [g_k^r(z(t))] \right| \, dt = \int_0^{t_k^\ast} \frac{d}{dt} [g_k^r(z(t))] \, dt - \int_{t_k^\ast}^1 \frac{d}{dt} [g_k^r(z(t))] \, dt = 2 g_k^r(z(t_k^\ast)) - g_k^r(z) - \frac{1}{q} \]
Combining the previous two displays, we get
\beas
D_\rho^g (z,z_\beta) & = & \sum_{k \in A} \left( 2 g_k^r(z(t_k^\ast)) - g_k^r(z) - \frac{1}{q} \right) + \sum_{k \notin A} \left(\frac{1}{q} - g_k^r(z)  \right) \\
& = & 2 \sum_{k \in A} \left( g_k^r(z(t_k^\ast)) - \frac{1}{q} \right)
\eeas
Since $\beta < \beta_s$ and $k \in A_z$, we have 
\[ g_k^r(z(t_k^\ast)) < z(t_k^\ast)_k \leq z(1)_k = z_k \]
and we conclude that 
\[ D_\rho^g (z,z_\beta) < 2 \sum_{k \in A} \left( z_k - \frac{1}{q} \right) = \| z - z_\beta \|_1 \]
Thus
$${d_g(z,z_\beta) \over \| z - z_\beta \|_1} \leq {D_\rho^g (z,z_\beta) \over \| z - z_\beta \|_1} <1 \quad \text { for all } z \not=z_\beta \text{ in } \mathcal{P}.$$

\bigskip
\noi
Next, since we are dealing with the straight line segments $\rho$,
$$\limsup_{z \rightarrow z_\beta}{D_\rho^g (z,z_\beta) \over \| z - z_\beta \|_1}=\limsup_{z \rightarrow z_\beta} {\|g(z)-g(z_\beta)\|_1 \over \|z-z_\beta\|_1} <1$$
by (\ref{zero}), the Mean Value Theorem, and $H(z) \in \mathcal{C}^3$. This, in turn, guarantees the continuity required for Condition \ref{uniform}:
$$\limsup_{z \rightarrow z_\beta}{d_g(z,z_\beta) \over \| z - z_\beta \|_1} \leq \limsup_{z \rightarrow z_\beta}{D_\rho^g (z,z_\beta) \over \| z - z_\beta \|_1}<1$$
Thus Condition \ref{uniform} is proved for the CWP model. Moreover this proves that the family of straight line segments $\rho$ is a neo-geodesic family (see definition following Condition \ref{uniform}). Indeed, there is $\delta \in (0,1)$ such that 
$$\left\{\rho:~z(t)={1 \over q}(1-t)+zt, ~~z \in \mathcal{P} \right\} \quad \text{ is a } {\bf NG}_\delta \text{ family of smooth curves,}$$
 i.e. $\forall z\not= z_\beta$ in $\mathcal{P}$, and corresponding $\rho:~z(t)={1 \over q}(1-t)+zt$,
$$ {D_\rho^g (z,z_\beta) \over \| z - z_\beta \|_1} \leq 1-\delta/2 \qquad$$

\bigskip
\noi
Since the family of straight line segments $\rho$ is a neo-geodesic family ${\bf NG}_\delta$, the integrals
$$ D_\rho^g (x, z) := \sum\limits_{k=1}^q\int\limits_{\rho} \Big| \Big<\nabla g_k^r(y), dy \Big> \Big| $$
can be uniformly approximated by the corresponding  Riemann sums of small enough step size by the Mean Value Theorem as $H(z) \in \mathcal{C}^3$ and therefore each $g_k^r(z) \in \mathcal{C}^2$.
That is, there exists a constant $C>0$ that depends on the second partial derivatives of $g^r(z)=\big(g_1^r(z),\hdots,g_q^r(z)\big)$, such that for $\varepsilon>0$ small enough, the curve  $\rho={1 \over q}(1-t)+zt$ in the family  ${\bf NG}_\delta$ that connects $z_\beta$ to $z$ satisfies
$$\left| \sum_{k=1}^q  \sum_{i=1}^r \Big| \Big<z_i - z_{i-1}, \nabla g_k^r(z_{i-1}) \Big> \Big|-D_\rho^g (z,z_\beta) \right| < C r \ve^2
\qquad \forall z \in \mathcal{P} \text{ s.t. } \|z-z_\beta\|_1 \geq \varepsilon $$
for a sequence of points $z_0=z_\beta,z_1,\hdots,z_r=z \in \mathcal{P}$ interpolating $\rho$ such that
$$\varepsilon \leq \|z_i - z_{i-1}\|_1  < 2\varepsilon \quad \text{ for } i=1,2,\hdots, r.$$
Hence
$${\sum\limits_{k=1}^q  \sum\limits_{i=1}^r \Big| \Big<z_i - z_{i-1}, \nabla g_k^r(z_{i-1}) \Big> \Big| \over \|z-z_\beta\|_1} \leq 1-\delta/2+C\ve \leq 1-\delta/3$$
for $\ve \leq \delta/(6C)$. This concludes the proof of Condition \ref{RS}. \hfill $\square$

\bigskip
Finally, the region of exponentially slow mixing $\beta> \beta_s(q,r)$ can be shown using the standard approach of bottleneck ratio similar to section 7 in \cite{KOT}.

\section*{Acknowledgments}
This work was partially supported by a grant from the Simons Foundation ($\#$284262 to Yevgeniy Kovchegov).


\bibliographystyle{amsplain}

\end{document}